\newcommand{\incgr}[1]{\hbox{$\vcenter{%
  \hbox{\includegraphics%
  {#1.eps}}}$}}
\newcommand{\sst}{\scriptscriptstyle}
\newcommand{\df}[1]{\textbf{#1}}
\newcommand{\ssm}{\smallsetminus}
\newcommand{\red}{{\mathrm{red}}}
\newcommand{\bA}{\mathbb{A}}
\newcommand{\F}{\mathbb{F}}
\newcommand{\Gm}{\mathbb{G}_{\mathrm{m}}}
\newcommand{\bP}{\mathbb{P}}
\newcommand{\Q}{\mathbb{Q}}
\newcommand{\Z}{\mathbb{Z}}
\newcommand{\fu}{\mathfrak{u}}
\newcommand{\cA}{\mathcal{A}}
\newcommand{\cC}{\mathcal{C}}
\newcommand{\cD}{\mathcal{D}}
\newcommand{\cF}{\mathcal{F}}
\newcommand{\cG}{\mathcal{G}}
\newcommand{\cI}{\mathcal{I}}
\newcommand{\cL}{\mathcal{L}}
\newcommand{\cM}{\mathcal{M}}
\newcommand{\cO}{\mathcal{O}}
\newcommand{\cS}{\mathcal{S}}
\newcommand{\sP}{\mathsf{P}}
\newcommand{\vD}{\mathsf{D}}
\newcommand{\sgD}{\mathbb{D}}
\newcommand{\IC}{\mathrm{IC}}
\newcommand{\cIC}{\mathcal{IC}}
\newcommand{\cano}[1]{\Omega_{#1}^{\mathrm{top}}}
\newcommand{\Dbm}[1]{\mathrm{D}^{\mathrm{b}}_{\mathrm{m}}(#1)}
\newcommand{\Dbml}[2]{\mathrm{D}^{\le #2}}
\newcommand{\Dbmg}[2]{\mathrm{D}^{\ge #2}}
\newcommand{\Dwl}[1]{\mathrm{D}_{\le #1}}
\newcommand{\Dwg}[1]{\mathrm{D}_{\ge #1}}
\newcommand{\p}{{}^p}
\newcommand{\bru}{{}^{\bar r}}
\newcommand{\ru}{{}^{r}}
\newcommand{\q}{{}_q}
\newcommand{\cheq}{{}_{\hat q}}
\newcommand{\cg}[1]{\cC_G(#1)}
\newcommand{\cgl}[2]{\cC_G(#1)_{\le #2}}
\newcommand{\cgg}[2]{\cC_G(#1)_{\ge #2}}
\newcommand{\Db}[1]{\cD_{\sst G}^{\sst\mathrm{b}}(#1)}
\newcommand{\Dm}[1]{\cD_{\sst G}^{\sst -}(#1)}
\newcommand{\Dp}[1]{\cD_{\sst G}^{\sst +}(#1)}
\newcommand{\Dl}[2]{\Db{#1}^{\le #2}}
\newcommand{\Dg}[2]{\Db{#1}^{\ge #2}}
\newcommand{\Dml}[2]{\Dm{#1}^{\le #2}}
\newcommand{\Dpg}[2]{\Dp{#1}^{\ge #2}}
\newcommand{\Dsl}[2]{\Db{#1}_{\le #2}}
\newcommand{\Dsg}[2]{\Db{#1}_{\ge #2}}
\newcommand{\Dsp}[2]{\Db{#1}_{[#2]}}
\newcommand{\Dmsl}[2]{\Dm{#1}_{\le #2}}
\newcommand{\Dpsg}[2]{\Dp{#1}_{\ge #2}}
\newcommand{\Dkl}[2]{\Db{#1}_{\sqsubseteq #2}}
\newcommand{\Dkg}[2]{\Db{#1}_{\sqsupseteq #2}}
\newcommand{\Dmkl}[2]{\Dm{#1}_{\sqsubseteq #2}}
\newcommand{\Dpkg}[2]{\Dp{#1}_{\sqsupseteq #2}}
\newcommand{\Cento}{\mathrm{Z}^\circ}
\newcommand{\Rep}[1]{\mathfrak{Rep}(#1)}
\newcommand{\la}{\langle}
\newcommand{\ra}{\rangle}
\newcommand{\hto}{\hookrightarrow}
\newcommand{\Qlb}{\bar{\mathbb{Q}}_{\ell}}
\newcommand{\orb}[1]{\mathbb{O}(#1)}
\DeclareMathOperator{\alt}{alt}
\DeclareMathOperator{\charac}{char}
\DeclareMathOperator{\cod}{cod}
\DeclareMathOperator{\cok}{cok}
\DeclareMathOperator{\End}{End}
\DeclareMathOperator{\Ext}{Ext}
\DeclareMathOperator{\Hom}{Hom}
\DeclareMathOperator{\cHom}{\mathcal{H}\mathit{om}}
\DeclareMathOperator{\Irr}{Irr}
\DeclareMathOperator{\Lie}{Lie}
\DeclareMathOperator{\step}{step}
\DeclareMathOperator{\cRHom}{R\mathcal{H}\mathit{om}}
\DeclareMathOperator{\skdeg}{sk\,deg}
\newcommand{\Lotimes}{\mathchoice%
  {\overset{\scriptscriptstyle L}{\otimes}}%
  {\otimes^{\scriptscriptstyle L}}{\otimes^L}{\otimes^L}}
\newtheorem{thm}{Theorem}[section]
\newtheorem{prop}[thm]{Proposition}
\newtheorem{cor}[thm]{Corollary}
\theoremstyle{definition}
\newtheorem{defn}[thm]{Definition}
\newcounter{pervenum}
\numberwithin{equation}{section}
\def\usectrnz#1{\@nmbrlisttrue\def\@listctr{#1}}
\newenvironment{eqenum}{\begin{list}{(\theequation)}{%
   \usectrnz{equation}\def\makelabel##1{{\upshape ##1}}%
   \setlength{\itemsep}{\medskipamount}\setlength{\topsep}{\medskipamount}%
   \setlength{\labelwidth}{0pt}\setlength{\labelsep}{12pt}%
   \setlength{\leftmargin}{12pt}}}{\end{list}}
\newenvironment{proofcom}{\begin{proof}[Remarks on proof]%
  }{\end{proof}}
\title{Introduction to staggered sheaves}
\author{Pramod N. Achar}
\begin{document}

\begin{abstract}
This note is an expository account of the theory of staggered sheaves, based on a series of lectures given by the author at RIMS (Kyoto) in October 2008.
\end{abstract}

\maketitle

Perverse sheaves have become a tool of great importance in representation
theory, largely because of the remarkable way in which they provide a link
between geometry and algebra.  A single perverse sheaf contains
information reminiscent of classical algebraic topology; indeed, the
prototypical example of a perverse sheaf comes from the Goresky--MacPherson
theory of ``intersection homology.''  But the category of perverse
sheaves behaves like the module categories typically seen in representation
theory, and some of the most important theorems here are Ext-vanishing and
complete reducibility criteria.

\emph{Staggered sheaves}, the subject of the present note, were introduced by the author in~\cite{a:sts} and subsequently studied in a series of papers~\cite{at:bstc, at:pdtss, a:qhpss} by the author and D.~Treumann.  The category of staggered sheaves also enjoys a long list of
remarkable algebraic properties, resembling the most important properties
of perverse sheaves.  Most notably, the category of staggered sheaves is
quasi-hereditary and exhibits ``purity'' and ``decomposition'' phenomena. 
But whereas perverse sheaves are built out of local systems, staggered
sheaves are built out of vector bundles, and they live in the derived
category of coherent sheaves.

This note is a self-contained exposition of the main results on staggered
sheaves.  It is meant to be accessible to anyone familiar with the basic
theory of algebraic groups and a passing acquaintance with derived
categories.  Some familiarity with
perverse sheaves may be helpful for motivation, but
the relevant facts will be reviewed in
Section~\ref{sect:perv}.  Most proofs will be omitted.  No new results appear here.

 Notation and definitions occupy Section~\ref{sect:prelim}, and the main
theorems are listed in Sections~\ref{sect:basic}--\ref{sect:quasi}.  An example on
$\bP^1$ is worked out in Section~\ref{sect:exam}, and some
open questions are posed in Section~\ref{sect:ques}. 
Appendix~\ref{sect:baric} surveys the theory of \emph{baric structures},
required for the proofs of the main theorems, and
Appendix~\ref{sect:litguide} describes differences in terminology,
notation, and conventions among the various papers.  

\subsection*{Acknowledgements}

This paper is based on a series of lectures given by the author at
RIMS (Kyoto) in October 2008.  I am deeply grateful to Syu Kato, Susumu Ariki, and Kyo Nishiyama for welcoming me to Kyoto and for their warm and generous hospitality during my stay there.  The work described here was carried out with support from NSF grant DMS-0500873.

\section{Prologue: Staggered Representations}
\label{sect:stagrep}

In this section, we carry out a construction that is parallel to, and
exhibits the main features of, that of staggered sheaves.  Sheaves do not
explicitly appear in this section, but the ideas in this section will be
revisited later.

Let $H$ be an algebraic group over an algebraically closed field $\Bbbk$. 
Assume for simplicity that either $\charac \Bbbk = 0$, or else that $H$ is
solvable.  In either case, $H$ admits a Levi decomposition $H = LU$, where
$L$ is reductive, and $U$ is the unipotent radical of $H$.  Moreover, the
rational representations of $L$ are all semisimple.  (If $H$ is solvable,
$L$ is a torus.)  Let $\Rep H$ denote the category of finite-dimensional
rational representations of $H$, and let $\Irr(H)$ denote the set of
(isomorphism classes of) irreducible rational representations. 

We now describe a recipe for attaching an integer, called the \emph{step},
to each $V \in \Irr(H)$.   Choose, once and for all, a cocharacter 
\[
 \xi: \Gm \to \Cento(L),
\]
where $\Cento(L)$ denotes the identity component of the center of $L$, and
assume that
\begin{eqenum}
\item $\la\xi, \lambda\ra < 0$ for all weights $\lambda$ of $\Cento(L)$ on
$\Lie(U)$. \label{it:split-rep}
\end{eqenum}
Next, recall that $U$ acts trivially on any irreducible
representation of $H$, so $\Irr(H)$ can be identified with $\Irr(L)$.  In any
irreducible representation $V$ of $L$, the subgroup $\Cento(L)$ acts by
a character $\chi_V: \Cento(L) \to \Gm$.  We define the step of $V$ by
\[
\step V = \la \xi, \chi_V \ra.
\]

Next, let $\cD = D^b(\Rep H)$ denote the bounded derived category of $\Rep
H$.   We regard $\Rep H$ as a subcategory of $\cD$ as usual, by thinking of
objects of $\Rep H$ as chain complexes concentrated in degree $0$. 
Objects of the form $V[n]$, where $V \in \Irr(H)$, may
be plotted on a two-dimensional grid whose horizontal axis indicates step,
and whose vertical axis indicates cohomological degree in $\cD$:
\[
\incgr{gridsample}
\]
Of course, not all objects in $\cD$ live on points on this grid, but $\cD$
is generated by such objects: all chain complexes are built up by
extensions from complexes concentrated in a single degree, and all
$H$-representations are built up from irreducible representations.  Indeed,
the subcategory $\Rep H$ may be pictured thus:
\[
\incgr{reph}
\]
Let $\cM(H)$ denote the subcategory generated by objects of the form
\[
V[\step V],
\qquad \text{where $V \in \Irr(H)$.}
\]
In our picture of $\cD$, $\cM(H)$ is generated by objects lying along a line of slope $-1$:
\[
\incgr{stagt}
\]

\begin{thm}
$\cM(H)$ is a semisimple abelian subcategory of $\cD$.  The simple objects are precisely the objects of the form $V[\step V]$, with $V \in \Irr(H)$.
\end{thm}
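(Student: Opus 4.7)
The plan is to compute $\Hom_{\cD}$ and $\Ext^1_{\cD}$ between the generating objects $V[\step V]$ and $W[\step W]$ for $V, W \in \Irr(H)$, and to show that $\Hom_{\cD}(V[\step V], W[\step W])$ equals $\Bbbk$ when $V \cong W$ and vanishes otherwise, while $\Ext^1_{\cD}(V[\step V], W[\step W])$ always vanishes. Granted this, every extension of generators splits, so $\cM(H)$ consists precisely of finite direct sums of objects $V[\step V]$; such a category is manifestly semisimple abelian with exactly the claimed simples.

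Since $\Hom_{\cD}(V[\step V], W[\step W + k]) = \Ext^{k + \step W - \step V}_H(V, W)$, the task reduces to computing $\Ext^n_H(V, W)$ for irreducibles. I would apply the Lyndon--Hochschild--Serre spectral sequence for the normal unipotent subgroup $U$ of $H$ with quotient $L$. Since $\Rep L$ is semisimple, this spectral sequence collapses at $E_2$, and because $U$ acts trivially on the irreducibles $V$ and $W$ one obtains
\[
\Ext^n_H(V, W) = \bigl( H^n(U, \Bbbk) \otimes V^* \otimes W \bigr)^L.
\]

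The heart of the argument is to control the $\Cento(L)$-weights on $H^n(U, \Bbbk)$: they are sums of $n$ weights of $\Cento(L)$ on $(\Lie U)^*$. This is immediate in characteristic zero, where $H^n(U, \Bbbk)$ is a subquotient of $\Lambda^n(\Lie U)^*$; in the solvable positive-characteristic case one can argue inductively using a filtration of $U$ by $\Cento(L)$-stable subgroups with $\mathbb{G}_a$-quotients. By hypothesis~(\ref{it:split-rep}), every weight of $\Cento(L)$ on $(\Lie U)^*$ pairs strictly positively with $\xi$, so for $n \geq 1$ every weight $\mu$ of $\Cento(L)$ on $H^n(U, \Bbbk)$ satisfies $\la \xi, \mu \ra > 0$. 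An $L$-invariant vector in $H^n(U, \Bbbk) \otimes V^* \otimes W$ requires $\chi_V - \chi_W$ to occur as such a weight $\mu$, so $\Ext^n_H(V, W) \neq 0$ with $n \geq 1$ forces $\step V > \step W$, while $\Ext^0_H(V, W)$ is nonzero exactly when $V \cong W$ (and then equals $\Bbbk$).

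Substituting back: for $V \not\cong W$, both the exponent $\step W - \step V$ relevant for $\Hom_{\cD}$ and the exponent $\step W - \step V + 1$ relevant for $\Ext^1_{\cD}$ are incompatible with the necessary condition $\step V > \step W$ (or else are strictly negative), so both groups vanish. For $V \cong W$, $\Hom_{\cD}$ is one-dimensional and $\Ext^1_{\cD} = \Ext^1_H(V,V)$ again vanishes by the same inequality. I expect the main obstacle to be the weight analysis of $H^*(U, \Bbbk)$ in positive characteristic; everything else is formal once that input is in hand.
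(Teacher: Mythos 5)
Your proposal is correct, and its computational heart coincides with the paper's: the paper's proof sketch also reduces everything to showing that $\Ext^{k}_H(V,W)$ vanishes in the relevant range using assumption~(1.1), though it leaves that computation unexplained, whereas you supply it (Lyndon--Hochschild--Serre for $U \trianglelefteq H$, collapse from semisimplicity of $\Rep L$, and the weight analysis of $H^{*}(U,\Bbbk)$ against $\xi$ — including the genuinely nontrivial positive-characteristic case). Where you diverge is in the packaging. The paper constructs a full $t$-structure $(\cD^{\le 0}, \cD^{\ge 0})$ on $\cD$ and exhibits $\cM(H)$ as its heart; the hardest axiom there is the existence of truncation triangles for arbitrary objects of $\cD$, proved by induction and the $9$-lemma. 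You avoid that entirely by showing $\Ext^1_{\cD}$ between generators vanishes, so the extension-closure defining $\cM(H)$ is just the additive hull of the objects $V[\step V]$, which is manifestly semisimple with the claimed simples. Your route is more economical for the theorem as stated; the paper's route buys the ambient $t$-structure and its truncation functors, which is the real point of the exercise since it is the prototype for the staggered $t$-structure of Theorem~\ref{thm:tstruc}. One small point you should make explicit: to know that the additive hull is an \emph{abelian subcategory of $\cD$} (short exact sequences corresponding to distinguished triangles), you also need $\Hom_{\cD}(V[\step V], W[\step W + k]) = 0$ for $k < 0$; this follows from exactly the same inequality ($\Ext^n_H(V,W)\neq 0$ with $n\ge 1$ forces $\step V - \step W \ge 1$, which is incompatible with the exponent $k + \step W - \step V$ being nonnegative when $k<0$), but your summary only treats $k = 0$ and $k = 1$.
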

\begin{proof}[Proof sketch]
The usual way to produce abelian subcategories of triangulated categories is to use the machinery of \emph{$t$-structures}~\cite{bbd}.  A $t$-structure on $\cD$ is pair of subcategories $(\cD^{\le 0}, \cD^{\ge 0})$, satisfying a short list of axioms that will be recalled below.  

In our case, we define $\cD^{\le 0}$ (resp. $\cD^{\ge 0}$) to be the subcategory generated by objects $V[n]$ with $V \in \Irr(H)$ and $n \ge \step V$ (resp.~$n \le \step V$).  Thus:
\[
\cD^{\le 0}: \incgr{stagtneg}
\qquad 
\cD^{\ge 0}: \incgr{stagtpos}
\]
The three axioms to be checked are as follows:

(1)~\emph{$\cD^{\le 0}[1] \subset \cD^{\le 0}$ and $\cD^{\ge 0} \subset
\cD^{\ge 0}[1]$}.  This is obvious from the definition. 

(2)~\emph{$\Hom(A,B[-1]) = 0$ for all $A \in \cD^{\le 0}$ and $B \in
\cD^{\ge 0}$}.  By a standard induction argument, this can be reduced to
showing for any $V, W \in \Irr(H)$, we have $\Hom(V[n], W[m]) = 0$ whenever
$n \ge \step V$ and $m < \step W$.  Equivalently, we must show that
$\Ext^{m-n}(V,W)$ vanishes.  If $m - n < 0$, this is trivial, but if $m - n
\ge 0$, this must be proved using the assumption~\eqref{it:split-rep}. 

(3)~\emph{For any object $X \in \cD$, there is a distinguished triangle $A
\to X \to B \to$ with $A \in \cD^{\le 0}$ and $B[1] \in \cD^{\ge 0}$}.  The
proof of this proceeds by induction on the number of nonzero cohomology
objects of $X$ and a diagram-chasing argument using the
$9$-lemma~\cite[Proposition~1.1.11]{bbd}. 

The proof of the semisimplicity of $\cM(H)$ and the determination of simple
objects both come down to $\Hom$-group calculations similar to those in
part~(2) above. 
\end{proof}

\section{Review of Perverse Sheaves and Weights}
\label{sect:perv}

In this section, we briefly review the most important facts about perverse
sheaves, at least from the viewpoint of applications in representation
theory.  Subsequent sections do not depend on any facts listed here;
rather, the list is meant to serve as motivation for later results on
staggered sheaves.

Let $X$ be a variety over the algebraic closure
$\overline \F_q$ of a finite field $\F_q$.  Suppose $X$ is endowed with
a fixed stratification $\cS$ into finitely many locally closed smooth
strata, as well as a finite collection $\cL$ of isomorphism classes of
local systems on those strata.  We assume that the pair $(S,\cL)$
satisfies the conditions of~\cite[\S 2.2.10(a)--(c)]{bbd}, which are
technical conditions meant to ensure that the usual sheaf functors behave
well.

Fix a prime number $\ell$ different from the characteristic of $\F_q$, and
let $\Dbm X$ denote the ``derived'' category of bounded mixed constructible
complexes of \'etale $\Qlb$-sheaves.  (In the presence of a group action,
one should instead take $\Dbm$ to denote the Bernstein--Lunts equivariant
derived category~\cite{bl:esf}.)  For each stratum $S \in
\cS$, let $i_S: S \hto X$ denote the inclusion map.  Next, we define two
new full subcategories of $\Dbm X$ as follows:
\begin{align*}
\p\Dbml X0 &= \{ F \mid \text{$h^k(i_S^*F) = 0$ if $k > -\dim S$ for all $S
\in \cS$} \}, \\
\p\Dbmg X0 &= \{ F \mid \text{$h^k(i_S^!F) = 0$ if $k < -\dim S$ for all $S
\in \cS$} \}.
\end{align*}
The category of \emph{perverse sheaves}, denoted $\sP(X)$, is defined as follows:
\[
\sP(X) = \p\Dbml X0 \cap \p\Dbmg X0.
\]
We begin with a few basic facts about $\sP(X)$.

\begin{eqenum}
\item The pair $(\Dbml X0, \Dbmg X0)$ is a nondegenerate, bounded $t$-structure on $\Dbm X$, and therefore $\sP(X)$ is an abelian category. \label{it:tstruc}
\item Every perverse sheaf has finite length, and there is a bijection
\[
\left\{\begin{array}{c}
\text{simple objects}\\
\text{in $\sP(X)$}
\end{array}\right\}
\overset{\sim}\longleftrightarrow
\left\{
(C,L) \,\Big|
\begin{array}{c}
\text{$C \in \orb X$, and $L$ an irreducible}\\
\text{local system on $C$}
\end{array}\right\}
\]
The simple object corresponding to a pair $(C,L)$, denoted $\IC(C,L)$, is supported on $\overline C$, and its restriction to $C$ is $L[\dim C]$. \label{it:ic}
\item {\bf Poincar\'e--Verdier Duality}: The Verdier duality functor $\vD: \Dbm X \to \Dbm X$ preserves $\sP(X)$, and
\[
\vD \IC(C,L) \simeq \IC(C, L^*),
\]
where $L^*$ is the dual local system to $L$. \label{it:dual}
\setcounter{pervenum}{\value{enumi}}
\end{eqenum}

Next, we turn to the theory of weights.  Recall that being \emph{mixed}
means, roughly, that the Frobenius morphism acts on stalks at $\F_q$-points
of $X$ with well-behaved eigenvalues.  Specifically: all eigenvalues of
Frobenius are algebraic integers whose complex absolute values (under any
embedding $\overline\Q \hto C$) are of the form $q^{w/2}$ with $w \in \Z$. 
When $F$ is actually a sheaf (and not a complex), the integers $w$ that
occur are called the \emph{weights} of $F$.

The definition of weights for general objects of $\Dbm X$ is slightly
different.  The full subcategory of objects of weight ${}\le w$, denoted
$\Dwl w$, is defined by 
\[
\Dwl w = \{ F \in \Dbm X \mid \text{for each $k$, $h^k(F)$ has weights ${}\le w+k$} \}.
\]
There is also a category $\Dwg w$ of objects of weight ${}\ge w$.  When
$X$ is smooth, it can be defined by simply reversing the inequalities
above, but in general, that is not the correct definition.  Instead, it is
defined by
\begin{equation}\label{eqn:wtdual}
\Dwg w = \vD(\Dwl {-w}).
\end{equation}

By plotting sheaf weights along the horizontal axis, we may draw a picture
of $\Dwl w$ as shown below, in the spirit of the pictures in
Section~\ref{sect:stagrep}.  If $X$ is smooth, we may do likewise for
$\Dwg w$.
\[
\Dwl w: \incgr{weightneg}
\qquad
\txt{\strut \\ $\Dwg w$:\\ ($X$ smooth)} \incgr{weightpos}
\]
However, the latter picture can be misleading for nonsmooth $X$.

Finally, an object $F \in \Dbm X$ is \emph{pure} of weight $w$ if
\[
F \in \Dwl w \cap \Dwg w.
\]
Some important properties of the weight categories $\Dwl w$ and $\Dwg w$ are as follows.  If $X$ is smooth, the constant sheaf $\Qlb$ on $X$ is pure of weight $0$, but that does not hold for general $X$.

\begin{eqenum}
\item $\Dwl w[1] = \Dwl {w+1}$ and $\Dwg w[1] = \Dwg {w+1}$. \label{it:wtshift}
\item If $F \in \Dwl w$ and $G \in \Dwg {w+1}$, then $\Hom^i(F,G) = 0$ for all $i > 0$. \label{it:wtext}
\item (Deligne~\cite{del:lcw}) For a proper morphism $f: X \to Y$, the
functor $f_! = f_*$ takes pure objects to pure objects.
\label{it:wtweil}
\end{eqenum}

The last item is part of Deligne's proof of the Weil conjectures. 
Specifically, when $X$ is a smooth projective variety, it follows
from~\eqref{it:wtweil} (by taking $Y$ to be a point) that each $\ell$-adic
cohomology group $H^k(X,\Qlb)$ is pure of weight $k$. 

\begin{eqenum}
\item If $F$ and $G$ are perverse sheaves with $F \in \Dwl w$ and $G \in \Dwg {w+1}$, then in fact $\Hom^i(F,G) = 0$ for all $i \ge 0$.
\item \df{Purity:} Every perverse sheaf $F$ admits a canonical finite
filtration
\[
\cdots \subset F_{-1} \subset F_0 \subset F_1 \subset \cdots
\]
such that $F_w/F_{w-1}$ is pure of weight $w$.  In particular, every simple perverse sheaf is pure.
\item \df{Decomposition:} Any pure object in $\Dbm X$ is a direct
sum of shifts of simple perverse sheaves.
\end{eqenum}
For applications in representation theory, this Decomposition Theorem is usually used in conjunction with~\eqref{it:wtweil}.  

Finally, we consider ``standard'' and ``costandard'' objects.  Precise definitions will be given in Section~\ref{sect:quasi}; for now, one may keep in mind that Verma modules are standard objects in category $\cO$ for a complex semisimple Lie algebra.  Indeed, Verma modules are perhaps the motivating example for the general notion. 

\begin{eqenum}
\item Assume that each stratum $S$ is isomorphic to an affine space
$\bA^k$.  Then every simple perverse sheaf has a standard cover and a
costandard hull.  Moreover, $\sP(X)$ has enough projectives and enough
injectives. \label{it:aff-qh}
\end{eqenum}

\section{Preliminaries}
\label{sect:prelim}

\subsection{Notation and assumptions}
\label{subsect:not}

Let $\Bbbk$ be an algebraically closed field.  We temporarily assume that $\charac \Bbbk = 0$ (this will soon be dropped).  Let us put:

\medskip

\begin{tabular}{@{}p{2cm}l}
$G$ & a reductive algebraic group over $\Bbbk$ \\
$X$ & a variety over $\Bbbk$ on which $G$ acts with finitely many orbits \\
$\orb X$ & the set of $G$-orbits in $X$ \\
$\cg X$ & the category of $G$-equivariant coherent sheaves on $X$ \\
$\Db X$ & the bounded derived
category of $\cg X$ \\
$\omega_X$ & the equivariant dualizing complex of $X$ in $\Db X$
\end{tabular}

\medskip

\noindent
To be more precise, we normalize $\omega_X$ in the following way:
If $X$ is smooth, then
\[
\omega_X \simeq \cano X[\dim X],
\]
where $\cano X$ denotes the canonical bundle of $X$.  If $X$ is not smooth, the Sumihiro embedding theorem~\cite{sum:ec} tells us that we may find a closed $G$-equivariant embedding $\iota: X \hto Y$ into a smooth $G$-variety $Y$.  We then set
\[
\omega_X = R\iota^!\omega_Y.
\]
This object is independent of the choice of $\iota$.

We will also require the bounded-above and bounded-below derived
categories $\Dm X$ and $\Dp X$.  Recall~\cite{har:rd} that the latter must
be defined to consist of bounded-below chain complexes of
\emph{quasicoherent} sheaves with coherent cohomology.

For each orbit $C \in \orb X$, we define the following:

\medskip

\begin{tabular}{@{}p{2cm}l}
$i_C: C \hto X$ & the inclusion of $C$ as a
reduced locally closed subscheme \\
$H_C$ & the $G$-stabilizer of some point of $C$ \\
$H_C = L_CU_C$ & a Levi decomposition of $H_C$ \\
$\Cento(L_C)$ & the identity component of the center of the Levi factor of
$H_C$ \\
$\fu_C$ & the Lie algebra of the unipotent radical of $H_C$ \\
$\cI_C$ & the ideal sheaf in $\cO_X$ for the closed subvariety $\overline
C$ \\
$N^*(C)$ & conormal bundle of $C \subset X$
\end{tabular}

\medskip

\noindent
Above, the assumption that $\charac \Bbbk = 0$ was used in two places: the invocation of the Sumihiro embedding theorem for the definition of $\omega_X$, and the existence of a Levi decomposition for $H_C$.  We would now like to drop the assumption that $\charac \Bbbk = 0$, so if $\charac \Bbbk \ne 0$, we explicitly impose the following additional assumptions
\begin{eqenum}
\item $X$ admits a closed embedding $\iota: X \hto Y$ into a
smooth $G$-variety $Y$. \label{it:embed}
\item Each $H_C$ is solvable. \label{it:solv}
\end{eqenum}
Condition~\eqref{it:embed} is really not essential (see
Section~\ref{subsect:duality}), but in the absence of~\eqref{it:solv}, the
Decomposition Theorem (Theorem~\ref{thm:decomp}) fails.
Both hold in the important example where $G$ is a Borel subgroup
of reductive group and $X$ is a flag variety.

With these assumptions, we now have in any characteristic:
\begin{eqenum}
\item All representations of the reductive groups $L_C$ are completely
reducible. \label{it:semis}
\end{eqenum}

Recall that on a single $G$-orbit $C$, every equivariant coherent sheaf
is necessarily locally free.  In other words, $\cg
C$ is in fact the category of $G$-equivariant vector bundles on $C$, which
in turn is equivalent to the category $\Rep{H_C}$ of rational
representations of the isotropy group $H_C$.  We will freely make use of
the equivalence
\[
 \cg C \simeq \Rep{H_C}
\]
in the sequel.  In particular, the Lie algebra $\fu_C$ equipped with the
adjoint action of $H_C$ may be regarded as a coherent sheaf on $C$, and
the vector bundle $N^*(C)$ may be regarded as an $H_C$ representation.

\subsection{Definition of staggered sheaves}
\label{subsect:stagdefn}

The category of staggered sheaves depends on the following choices:
\begin{eqenum}
 \item For each orbit $C \in \orb X$, a cocharacter
$\xi_C: \Gm \to \Cento(L_C)$ such that
\[
 \la \xi_C, \lambda \ra \le -1
\qquad
\text{for all weights $\lambda$ of $\Cento(L_C)$ on $\fu_C$ and on
$N^*(C)$.}
\]
Such a collection $\{\xi_C \mid C \in \orb X \}$ is called
an \emph{$s$-structure} on $X$. \label{it:recsplit}
\item A function $r: \orb X \to \Z$.  This will be known as the
\emph{perversity function}.
\end{eqenum}
Fix an $s$-structure and a perversity once and for all.  

\begin{defn}
Let $\cF \in \cg C$ be an irreducible vector bundle, and let $\chi_\cF:
\Cento(L_C) \to \Gm$ be the character of $\Cento(L_C)$ on the corresponding
$H_C$-representation.  The \emph{step} of $\cF$ is defined by
\[
\step \cF = \la \xi_C, \chi_\cF\ra.
\]
\end{defn}

The notion of \emph{step} does not make sense for general objects of $\cg
C$.  Nevertheless, we can form filtrations of $\cg C$ in terms of steps of
irreducible objects.  For $w \in \Z$, we define full subcategories of $\cg
C$ as follows:
\begin{align*}
\cgl Cw &= \{ \cF \mid \text{for every irreducible subquotient $\cG$ of
$\cF$, $\step \cG \le w$} \}, \\
\cgg Cw &= \{ \cF \mid \text{for every irreducible subquotient $\cG$ of
$\cF$, $\step \cG \ge w$} \}.
\end{align*}
Using these, we define two full subcategories of $\Db X$ as follows:
\begin{align*}
\ru\Dl X0 &= \{ \cF \mid \text{$h^k(Li_C^*\cF) \in \cgl C{r(C)-k}$ for all
$C \in \orb X$ and all $k \in \Z$} \}, \\
\ru\Dg X0 &= \{ \cF \mid \text{$h^k(Ri_C^!\cF) \in \cgg C{r(C)-k}$ for all
$C \in \orb X$ and all $k \in \Z$} \}.
\end{align*}
(The unbounded versions $\ru\Dml X0$ and $\ru\Dpg X0$ are defined similarly.)

\begin{defn}
The category of \emph{staggered sheaves} on $X$, denoted $\ru\cM(X)$, or simply $\cM(X)$, is the category $\ru\Dl X0 \cap \ru\Dg X0$.
\end{defn}

It is clear than when $X$ consists of a single orbit $C$, the category
$\cM(C)$ is equivalent (up to shift) to the category $\cM(H_C)$ of
``staggered representations'' considered in Section~\ref{sect:stagrep}, and
we may draw the following pictures:
\[
\begin{array}{c@{\quad}c@{\quad}c}
\incgr{stagtneg} & \incgr{stagtpos} &\incgr{stagt} \\
\ru\Dl C0\qquad & \ru\Dg C0\qquad & \cM(C)^{\strut}\qquad
\end{array}
\]

When $X$ contains more than one orbit, the structure of these categories is
not so clear.  This topic will be explored further in
Section~\ref{subsect:barstag}.

\section{Basic Properties}
\label{sect:basic}

Perhaps the biggest technical difficulty in getting the theory of staggered
sheaves off the ground is that the restriction functor $j^*$, where $j: U
\hto X$ is an open inclusion, does not have adjoints in the setting of
coherent sheaves.  (The direct image functor $j_*$ gives quasicoherent
sheaves in general, where the extension-by-zero functor $j_!$ takes values
in the category of inverse limits of coherent sheaves.)

\begin{thm}\label{thm:tstruc}
$(\ru\Dl X0, \ru\Dg X0)$ is a nondegenerate, bounded $t$-structure on $\Db X$, so $\cM(X)$ is an abelian category.
\end{thm}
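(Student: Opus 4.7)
The plan is to argue by induction on the number of $G$-orbits in $X$. For the base case $X = C$ a single orbit, the pair $(\ru\Dl C0, \ru\Dg C0)$ coincides, after the overall shift by $r(C)$, with the $t$-structure on $D^b(\Rep{H_C})$ built in the proof sketch of Section~\ref{sect:stagrep}, so the result is already known. For the inductive step I would choose a closed orbit $i: Z \hto X$ with open complement $j: U \hto X$ containing one fewer orbit, assume the theorem on $U$ and on $Z$, and glue.

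The three axioms to be checked are: (1) $\ru\Dl X0[1] \subset \ru\Dl X0$ and $\ru\Dg X0 \subset \ru\Dg X0[1]$, which is immediate from monotonicity $\cgl C w \subseteq \cgl C{w+1}$ of the step filtrations together with the definition of $\ru\Dl X0$ and $\ru\Dg X0$; (2) $\Hom(F,G[-1]) = 0$ for $F \in \ru\Dl X0$ and $G \in \ru\Dg X0$, which I would reduce by d\'evissage on the cohomology objects $h^k(Li_C^*F)$ and $h^k(Ri_C^!G)$ to an $\Ext$-vanishing between irreducible $H_C$-representations, ultimately governed by the negativity condition $\la\xi_C,\lambda\ra\le-1$ on weights of $\Cento(L_C)$ on $\fu_C$ and on $N^*(C)$; and (3) existence of truncation triangles $A \to F \to B \to$ with $A \in \ru\Dl X0$ and $B[1] \in \ru\Dg X0$. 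The strengthening over Section~\ref{sect:stagrep} of the negativity hypothesis to include $N^*(C)$ is precisely what is needed for the extension classes that arise across orbit closures in (2).

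The main obstacle is axiom~(3). In the perverse-sheaf setting one would glue truncations from $U$ and $Z$ using the BBD recollement package $(i^*,i_*,i^!,j_!,j^*,j_*)$; here, as flagged at the start of this section, the open immersion $j$ fails to admit $j_!,j_*$ inside $\cg X$: the functor $j_*$ produces only quasicoherent sheaves, and $j_!$ only makes sense as a pro-object. The resolution, developed in Appendix~\ref{sect:baric}, is to replace the missing adjoints by truncation functors coming from an auxiliary \emph{baric structure}. One first builds on $\Dm X$ and $\Dp X$ a compatible family of baric truncations that split complexes according to the step filtration on stalks orbit by orbit, using that $j_*$ into quasicoherent sheaves is still well defined. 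Combining these baric truncations with ordinary cohomological truncation then manufactures the desired staggered truncation of any $F \in \Db X$. That the output remains bounded, i.e.\ lives in $\Db X$ rather than $\Dp X$ or $\Dm X$, is forced by coherence of $F$ together with the fact that each coherent $H_C$-representation is a finite direct sum of irreducibles by complete reducibility, hence has a bounded range of steps.

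Nondegeneracy and boundedness of the resulting $t$-structure then follow easily from the same finiteness: any nonzero $F \in \Db X$ has some $Li_C^*F$ with nonzero cohomology sheaf whose step spectrum is finite and nonempty, so $F$ lies in $\ru\Dl Xn$ for some finite $n$ and not for $n$ sufficiently negative, and symmetrically on the other side. The technical heart of the whole argument is therefore the construction of the baric structure in Appendix~\ref{sect:baric}; that is the point where the non-existence of $j_!$ and $j_*$ on coherent sheaves must be confronted head-on.
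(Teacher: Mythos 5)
Your proposal is correct and takes essentially the same route as the paper: the paper likewise rejects BBD-style gluing (for exactly the reason you give) and produces the truncation triangles by combining the baric truncation functors of Appendix~\ref{sect:baric} with ordinary cohomological truncation --- the ``staggering operation'' of Section~\ref{subsect:barstag} applied to the cohomologywise description in Proposition~\ref{prop:cohom}. The only cosmetic difference is your opening framing as an induction-plus-recollement argument, which you yourself discard; in the paper the orbit-by-orbit induction is absorbed into the construction of the baric structure itself rather than appearing in the passage to the $t$-structure.
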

\begin{proofcom}
This theorem cannot be proved using the method of ``gluing of
$t$-structures'' \cite[\S 1.4]{bbd} that is typically used
for perverse sheaves, because of the lack of adjoints mentioned above. 
Instead, we use the machinery of \emph{baric structures}, discussed in
Appendix~\ref{sect:baric}.
\end{proofcom}

\begin{thm}\label{thm:simple}
Every staggered sheaf has finite length, and there is a bijection
\[
\left\{\begin{array}{c}
\text{simple objects}\\
\text{in $\cM(X)$}
\end{array}\right\}
\overset{\sim}\longleftrightarrow
\left\{
(C,\cL) \,\Big|
\begin{array}{c}
\text{$C \in \orb X$, and $\cL$ an irreducible}\\
\text{vector bundle on $C$}
\end{array}\right\}
\]
The simple object corresponding to a pair $(C,\cL)$, denoted
$\ru\cIC(C,\cL)$, or simply $\cIC(C,\cL)$, is supported on $\overline C$,
and its restriction to $C$ is $\cL[\step \cL - r(C)]$. 
\end{thm}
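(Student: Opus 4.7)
The plan is to reduce to the single-orbit case from Section~\ref{sect:stagrep} and then glue across orbits, using baric structures as a substitute for the missing recollement. When $X=C$ is a single orbit, the equivalence $\cg C\simeq\Rep{H_C}$ identifies $\cM(C)$ with the category $\cM(H_C)$ of staggered representations (shifted by $-r(C)$), which was shown in Section~\ref{sect:stagrep} to be semisimple with simples exactly $\cL[\step\cL - r(C)]$ for $\cL\in\Irr(H_C)$. In particular, finite length and the classification of simples hold on a single orbit.

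For the global statement I would argue by noetherian induction on the support. Given an irreducible vector bundle $\cL$ on an orbit $C$, let $j:C\hto X$ denote the inclusion and $Z=\overline C\ssm C$ the boundary. The key step is to construct an object $\cIC(C,\cL)\in\cM(X)$ supported on $\overline C$, with $j^*\cIC(C,\cL)\simeq \cL[\step\cL-r(C)]$ and having no nonzero sub- or quotient object supported on $Z$. Any such object is automatically simple: a proper sub- or quotient object would restrict to a proper subobject of the simple $\cL[\step\cL-r(C)]$ on $C$, and would therefore be supported on $Z$, which is ruled out by construction. Conversely, any simple $\cF\in\cM(X)$ has irreducible support $\overline C$ and simple restriction $j^*\cF\simeq\cL[\step\cL-r(C)]$, and the boundary-vanishing characterization then forces $\cF\simeq\cIC(C,\cL)$, giving the claimed bijection.

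The main obstacle, as already flagged in the remarks on Theorem~\ref{thm:tstruc}, is that $j^*$ has no adjoints in the coherent setting, so the classical BBD formula defining $j_{!*}\cL'$ as the image of $\p H^0(j_!\cL')\to\p H^0(j_*\cL')$ is unavailable. Instead I would start from \emph{some} $G$-equivariant coherent extension $\widetilde\cL$ of $\cL$ across $Z$ (for instance a push-forward of $\cL$ to $\overline C$ twisted by a suitable power of the ideal $\cI_{\overline C}$), and then apply the baric truncations $\beta_{\le w},\beta_{\ge w}$ of Appendix~\ref{sect:baric} in combination with the staggered truncations $\ru\tau_{\le 0},\ru\tau_{\ge 0}$ to cut this extension down to the unique object of $\cM(X)$ with the required boundary-vanishing. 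Verifying that these truncations commute appropriately and that the resulting object is independent of the initial choice of $\widetilde\cL$ is the technical heart of the argument.

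Finite length then follows by the same induction: any nonzero $\cF\in\cM(X)$ restricts on an open orbit of its support to a finite-length object in $\cM(C)$ by the single-orbit case; each simple constituent $\cL[\step\cL-r(C)]$ on $C$ lifts via the intermediate-extension construction to an $\cIC(C,\cL)$ in $\cM(X)$, and peeling these off produces short exact sequences whose outer terms are supported on strictly smaller closed sets, to which the induction hypothesis applies.
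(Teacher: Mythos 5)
Your proposal follows essentially the same route the paper indicates: your condition that $\cIC(C,\cL)$ admit no nonzero sub- or quotient object supported on $\overline C \ssm C$ is, for objects of the heart with the prescribed restriction to $C$, equivalent to the paper's stated criterion that $Li_{C'}^*\cF \in \ru\Dml{C'}{-1}$ and $Ri_{C'}^!\cF \in \ru\Dpg{C'}{1}$ for all orbits $C'$ in the boundary, and constructing the object by truncating an arbitrary equivariant coherent extension of $\cL$, with baric structures compensating for the missing adjoints of $j^*$, is exactly how the cited source papers proceed. The reduction to the single-orbit case and the noetherian induction for finite length are likewise the intended standard arguments.
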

\begin{proofcom}
Among all objects supported on $\overline C$ with the correct restriction
to $C$, the objects $\cF$ in the image of the functor $\cIC(C,\cdot)$ are
characterized by the property that for any $C' \subset \overline C \ssm C$,
we have 
\[
Li_{C'}^*\cF \in \ru\Dml {C'}{-1}
\qquad\text{and}\qquad
Ri_{C'}^!\cF \in \ru\Dpg {C'}{1}.
\]
This criterion can be used in examples to verify that some object is $\cIC(C,\cL)$
\end{proofcom}

Finally, we turn to duality.  In the coherent setting, the appropriate analogue of the Poincar\'e--Verdier duality functor is the Serre--Grothendieck duality functor $\sgD: \Db X \to \Db X$, developed in~\cite{har:rd} and generalized to the equivariant case in~\cite{bez:pcs}.  To describe the behavior of $\cM(X)$ under $\sgD$, we require some auxiliary definitions.
Let us define a new function
\[
\bar r: \orb X \to \Z,
\]
known as the \emph{dual perversity} to the given perversity $r: \orb X \to \Z$, by the formula
\begin{equation}\label{eqn:dualr}
\bar r(C) = \step \cano C - \dim C - r(C).
\end{equation}
We may then define categories
\[
\bru\Dl X0, \quad \bru\Dg X0, \quad \bru\cM(X)
\]
in the same way was $\ru\Dl X0$, $\ru\Dg X0$, and $\ru\cM(X)$, but using $\bar r$ in place of $r$ (and using the same $s$-structure as before).

\begin{thm}\label{thm:duality}
$\sgD(\ru\Dl X0) = \bru\Dg X0$ and $\sgD(\ru\Dg X0) = \bru\Dl X0$.  In particular, $\sgD(\ru\cM(X)) = \bru\cM(X)$, and
\[
\sgD(\ru\cIC(C,\cL)) \simeq \bru\cIC(C,\cL^\vee),
\]
where $\cL^\vee = \cHom(\cL,\cano C)$ is the Serre dual vector bundle to $\cL$.
\end{thm}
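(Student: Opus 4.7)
The plan is to reduce every assertion to the Serre--Grothendieck base-change identity
\[
Ri_C^! \circ \sgD_X \;\simeq\; \sgD_C \circ Li_C^*,
\]
which expresses the standard fact that the duality functor interchanges $Li_C^*$ and $Ri_C^!$ for the equivariant locally closed inclusion $i_C: C \hto X$ (proved in the equivariant setting in~\cite{bez:pcs}). Granting it, the equalities $\sgD_X(\ru\Dl X0) = \bru\Dg X0$ and $\sgD_X(\ru\Dg X0) = \bru\Dl X0$ reduce to a direct comparison of the defining cohomological conditions, combined with involutivity $\sgD_X^2 \simeq \mathrm{id}$; intersecting then yields $\sgD(\ru\cM(X)) = \bru\cM(X)$.

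To perform the comparison, I would first analyze $\sgD_C$ on a single orbit. Every equivariant coherent sheaf on $C$ is locally free, so the cohomology sheaves $h^j(\cG)$ of any $\cG \in \Db C$ are vector bundles. Since $C$ is smooth and $\cano C$ is a line bundle, $\mathcal{E}xt^p(\mathcal H, \cano C) = 0$ for $p > 0$ whenever $\mathcal H$ is locally free, so the local-Ext spectral sequence computing $R\cHom(\cG, \cano C)$ degenerates to give
\[
h^k(\sgD_C \cG) \;\simeq\; h^{-k - \dim C}(\cG)^\vee,
\]
with $(-)^\vee = \cHom(-, \cano C)$. The crucial step arithmetic is $\step \cL^\vee = \step \cano C - \step \cL$ for irreducible vector bundles, so $(-)^\vee$ exchanges $\cgl Cw$ and $\cgg C{\step\cano C - w}$. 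Substituting the definition $\bar r(C) = \step\cano C - \dim C - r(C)$ then shows that the condition ``$h^j(\cG) \in \cgl C{r(C) - j}$ for all $j$'' is interchanged with ``$h^k(\sgD_C \cG) \in \cgg C{\bar r(C) - k}$ for all $k$,'' which via the base-change identity is precisely $\sgD_X(\ru\Dl X0) \subseteq \bru\Dg X0$.

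For the last claim about IC sheaves, I would invoke the characterization recalled after Theorem~\ref{thm:simple}: $\bru\cIC(C,\cL^\vee)$ is the unique object supported on $\overline C$ whose restriction to $C$ equals $\cL^\vee[\step \cL^\vee - \bar r(C)]$ and which satisfies $Li_{C'}^*(-) \in \bru\Dml{C'}{-1}$ and $Ri_{C'}^!(-) \in \bru\Dpg{C'}{1}$ on all deeper orbits $C' \subset \overline C \ssm C$. The $t$-structure statement just proved ensures that $\sgD$ interchanges these strictness conditions, while a direct calculation on $C$ gives $\sgD_C(\cL[\step\cL - r(C)]) = \cL^\vee[\dim C - \step \cL + r(C)]$; the identity $\dim C - \step \cL + r(C) = \step \cL^\vee - \bar r(C)$ is immediate from the definition of $\bar r$. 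Thus $\sgD(\ru\cIC(C,\cL))$ satisfies all three characterizing properties of $\bru\cIC(C,\cL^\vee)$, and the isomorphism follows.

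The main obstacle is the careful setup of the base-change identity $Ri_C^! \sgD_X \simeq \sgD_C Li_C^*$ in the equivariant setting with possibly singular $X$, which is precisely what the normalization $\omega_X = R\iota^! \omega_Y$ and the formalism of~\cite{har:rd,bez:pcs} are designed to support. Once it is in hand, the rest is routine arithmetic in the step function, which the definition of $\bar r$ is custom-built to absorb.
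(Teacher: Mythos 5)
Your argument is correct, and it is essentially the standard one: the paper omits the proof of Theorem~\ref{thm:duality} entirely (deferring to~\cite{a:sts, at:bstc}), and the route taken there is exactly your reduction via $Ri_C^!\sgD_X \simeq \sgD_C Li_C^*$ to orbitwise step arithmetic, with the definition of $\bar r$ engineered to absorb the shift by $\step\cano C - \dim C$. The only point worth flagging is that your IC argument implicitly uses $Li_C^*\cF = Ri_C^!\cF$ for objects supported on $\overline C$ restricted to the open orbit $C$ of $\overline C$, which is true but should be said.
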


This statement becomes much cleaner, and closer to~\eqref{it:dual}, when it happens that $r = \bar r$.  Clearly, such a self-dual perversity exists if and only if the integer
\[
\step \cano C - \dim C,
\]
known as the \emph{staggered codimension} of $C$, is even for all $C \in \orb X$.  It is not known under what conditions this holds.  (Similar considerations pertain to perverse sheaves as well, although in the algebraic setting, there is always a self-dual perversity---we silently chose it in Section~\ref{sect:perv}---because the $\ell$-adic
cohomological dimension of every variety is even.)

\section{Purity and Decomposition}
\label{sect:puredec}

We now describe a family of subcategories of $\Db X$ that are analogous to
the weight categories $\Dwl w$ and $\Dwg w$ in $\ell$-adic sheaf theory. 
Given an integer $w$, define
\[
\ru\Dmkl Xw =
\left\{ \cF \Big| 
\begin{array}{c}
\text{$h^k(Li_C^*\cF) \in \cgl C{w+r(C)+\dim C+k}$} \\
\text{for all $C \in \orb X$ and all $k \in \Z$}
\end{array}\right\}.
\]
As usual, when there is no risk of ambiguity, this category will simply be
denoted $\Dmkl Xw$.  Its definition clearly bears a great deal of
resemblance to
that of $\ru\Dml Xw$; the most essential difference is that the step
constraint involves a ``$+k$'' rather than a ``$-k$.''  Next, we define
\[
\ru\Dpkg Xw = 
\left\{ \cF \Big| 
\begin{array}{c}
\text{$h^k(Ri_C^!\cF) \in \cgg C{w+r(C)+\dim C+k}$} \\
\text{for all $C \in \orb X$ and all $k \in \Z$}
\end{array}\right\}.
\]
We also have bounded versions $\Dkl Xw$ and $\Dkg Xw$.  Over a single
orbit, we may draw pictures of these categories resembling those for the
$\ell$-adic weight categories:
\[
\Dkl Xw: \incgr{cotneg}
\qquad
\Dkg Xw: \incgr{cotpos}
\]
Of course, these categories are more complicated when $X$ contains more
than one orbit.  Like the $\ell$-adic weight categories, these categories
enjoy a duality property, cf.~\eqref{eqn:wtdual}
and~Theorem~\ref{thm:duality}:

\begin{thm}\label{thm:coduality}
$\sgD(\ru\Dkl Xw) = \bru\Dkg X{-w}$ and $\sgD(\ru\Dkg Xw) = \bru\Dkl
X{-w}$.
\end{thm}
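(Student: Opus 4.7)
My plan is to reduce the theorem to a statement on a single orbit via the Grothendieck--Serre duality interchange, and then verify that statement by a direct computation on irreducible vector bundles.

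The key formula is
\[
Ri_C^! \circ \sgD_X \simeq \sgD_C \circ Li_C^*,
\]
which follows from the normalization $Ri_C^!\omega_X \simeq \omega_C$ together with the general identity $Rf^!\cRHom(A,B) \simeq \cRHom(Lf^*A, Rf^!B)$; both ingredients are available in the equivariant coherent setting thanks to~\cite{har:rd, bez:pcs}. Applied to $\cF \in \ru\Dkl Xw$, the task of verifying that $\sgD_X\cF \in \bru\Dkg X{-w}$ becomes the following single-orbit assertion, with $M := Li_C^*\cF$: if $h^j(M) \in \cgl{C}{w + r(C) + \dim C + j}$ for every $j$, then $h^k(\sgD_C M) \in \cgg{C}{-w + \bar r(C) + \dim C + k}$ for every $k$.

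For the single-orbit assertion I would proceed by d\'evissage. Both the hypothesis and the conclusion cut out full subcategories of $\Db C$ that are stable under extensions (via the long exact sequence in cohomology), and since $\cg C \simeq \Rep{H_C}$ consists of finite-length objects, every object of $\Db C$ is built up by extensions from shifts of irreducible vector bundles. It therefore suffices to treat $M = \cL[-j]$ with $\cL$ irreducible. Because $\cL$ is locally free, $\sgD_C(\cL[-j]) \simeq \cL^\vee[\dim C + j]$ with $\cL^\vee = \cHom(\cL,\cano C)$, so the only nonzero cohomology of $\sgD_C M$ sits in degree $k = -\dim C - j$ and equals $\cL^\vee$. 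The inclusion to verify then collapses to a single numerical inequality for the step of $\cL^\vee$; using $\step\cL^\vee = \step\cano C - \step\cL$ together with $\bar r(C) = \step\cano C - \dim C - r(C)$, it is equivalent to the hypothesis $\step\cL \le w + r(C) + \dim C + j$.

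The second equality $\sgD(\ru\Dkg Xw) = \bru\Dkl X{-w}$ is obtained either by applying $\sgD$ to the first (using $\sgD^2 \simeq \id$ together with $\bar{\bar r} = r$) or by an entirely parallel argument with the roles of $Li_C^*$ and $Ri_C^!$, and of $\cgl$ and $\cgg$, exchanged. I expect the main obstacle to lie in setting up the duality interchange formula carefully in the $G$-equivariant coherent setting---in particular, the correct normalization of $\omega_C$ relative to $\omega_X$ and the compatibility of $Li^*$ and $Ri^!$ with $\cRHom$ into dualizing complexes---since once that machinery is in hand, what remains is essentially combinatorial bookkeeping with steps and the dual perversity.
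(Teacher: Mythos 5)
Your argument is correct, and the bookkeeping checks out: with $k=-\dim C-j$ the required inequality $\step\cL^\vee\ge -w+\bar r(C)-j$ unwinds, via $\step\cL^\vee=\step\cano C-\step\cL$ and $\bar r(C)=\step\cano C-\dim C-r(C)$, to exactly the hypothesis $\step\cL\le w+r(C)+\dim C+j$. The paper omits the proof of this theorem, but your route --- reducing to single orbits via $Ri_C^!\circ\sgD_X\simeq\sgD_C\circ Li_C^*$ (which uses the normalization $Ri_C^!\omega_X\simeq\omega_C$), then d\'evissage to irreducible vector bundles and Serre duality on the smooth orbit --- is the same mechanism that underlies the duality statements in the source papers (compare Theorem~\ref{thm:barduality}); just make sure you record both one-sided inclusions (or state the single-orbit step as an equivalence) before invoking $\sgD^2\simeq\mathrm{id}$ and $\bar{\bar r}=r$ to upgrade them to the asserted equalities.
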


Our first result about them is similar to, but stronger
than, the results~\eqref{it:wtshift} and~\eqref{it:wtext} for $\ell$-adic
sheaves.

\begin{thm}
The collection of subcategories $(\{\Dkl Xw\}, \{\Dkg Xw\})$ forms a co-$t$-structure on $\Db X$.  That is:
\begin{enumerate}
\item $\Dkl Xw \subset \Dkl X{w+1}$ and $\Dkg Xw \supset \Dkg X{w+1}$.
\item $\Dkl Xw[1] = \Dkl X{w+1}$ and $\Dkg Xw[1] = \Dkg X{w+1}$.
\item If $\cF \in \Dkl Xw$ and $\cG \in \Dkg X{w+1}$, then $\Hom(\cF,\cG) = 0$.
\item For any $\cF \in \Db X$, there is a distinguished triangle $\cF' \to \cF \to \cF'' \to$ with $\cF' \in \Dkl Xw$ and $\cF'' \in \Dkg X{w+1}$.
\end{enumerate}
\end{thm}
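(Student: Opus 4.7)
The plan is to verify the four axioms in order, reducing the substantive claims~(3) and~(4) to a single-orbit calculation before globalising up the orbit stratification.

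Axioms~(1) and~(2) are formal. For~(1), an irreducible subquotient of step $\le w + r(C) + \dim C + k$ is \emph{a fortiori} of step $\le w + 1 + r(C) + \dim C + k$, so $\Dkl Xw \subset \Dkl X{w+1}$; the reverse inclusion for $\Dkg$ is analogous. For~(2), the identity $h^k(Li_C^*\cF[1]) = h^{k+1}(Li_C^*\cF)$ combined with a re-indexing of the defining inequality gives $\cF[1] \in \Dkl Xw$ iff $\cF \in \Dkl X{w-1}$, whence $\Dkl Xw[1] = \Dkl X{w+1}$; the computation for $\Dkg$ is identical.

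For~(3) over a single orbit $C$, use the equivalence $\cg C \simeq \Rep{H_C}$ and reduce by devissage to the case $\cF = \cL[-k_0]$, $\cG = \cL'[-k_0']$ with $\cL,\cL'$ irreducible. The hypotheses translate to $\step\cL' - \step\cL > k_0' - k_0$, and we must show the vanishing of $\Ext^{k_0 - k_0'}_{H_C}(\cL,\cL')$. Using the Hochschild--Serre spectral sequence for $U_C \triangleleft H_C$ together with the complete reducibility of $L_C$-representations, one reduces this to $\Hom_{L_C}(\cL, H^{k_0 - k_0'}(U_C, \cL'))$. The assumption $\la\xi_C,\lambda\ra \le -1$ for every weight $\lambda$ of $\Cento(L_C)$ on $\fu_C$ forces each $\Cento(L_C)$-weight appearing in $H^i(U_C, \cL')$ to have step $\ge i + \step\cL'$, so no copy of $\cL$ can appear once $\step\cL - \step\cL' < k_0 - k_0'$. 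In positive characteristic the solvability of $H_C$ reduces $L_C$ to a torus and keeps the weight bookkeeping tractable.

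For~(4) over a single orbit, and first in cohomological degree zero, the $\Cento(L_C)$-central-character decomposition of any $\cM \in \cg C$ gives $\cM = \bigoplus_\chi \cM^{[\chi]}$, and the subspace $F^s\cM = \bigoplus_{\la\xi_C,\chi\ra \le s} \cM^{[\chi]}$ is $H_C$-stable precisely because $\fu_C$ shifts central characters by weights of strictly negative step. This yields a short exact sequence with $F^s\cM \in \cgl Cs$ and quotient in $\cgg C{s+1}$. For an arbitrary complex one splices such decompositions degree by degree, shifting the threshold $s$ by $k$ in degree $k$ to match the ``$+k$'' built into the definition; since the differential of $\cF$ preserves $\Cento(L_C)$-weights and the thresholds are nondecreasing in $k$, the chosen subspaces really do form a subcomplex, and the distinguished triangle follows.

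The main obstacle is the promotion of these single-orbit results to all of $X$, exactly as in Theorem~\ref{thm:tstruc}: the open-restriction functor has no coherent adjoints, so the gluing recipe of~\cite{bbd} is unavailable. The cleanest remedy is to package the single-orbit data into the baric-structure framework of Appendix~\ref{sect:baric}; the local co-$t$-structures on each $\Db C$ can be shown to be compatible with $Ri_{C,*}$ and $Ri_C^!$---here the second half of the $s$-structure condition, the bound on the weights of $\Cento(L_C)$ on $N^*(C)$, is essential, because those weights control the shift of step under pushforward from an orbit closure---and the appendix's gluing machinery then produces a global co-$t$-structure, delivering the Hom-vanishing of~(3) and the decomposition triangles of~(4) simultaneously.
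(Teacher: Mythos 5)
Your single-orbit computations are sound and match the blueprint the paper itself gives in Section~\ref{sect:stagrep}: the Hom-vanishing reduces to $\Ext^{i}_{H_C}(\cL,\cL')$ via the Hochschild--Serre sequence and the bound $\la\xi_C,\lambda\ra\le -1$ on the weights of $\fu_C$, and the filtration $F^s\cM$ by $\xi_C$-pairing of central characters is exactly the ``split'' property of the $s$-structure that makes axiom~(4) work on one orbit (your observation that the nondecreasing thresholds $s_k=w+r(C)+\dim C+k$ make the degreewise truncation a subcomplex is the right germ of the general argument, and is precisely the ``$+k$'' of Proposition~\ref{prop:cohom}(3)). Axioms~(1) and~(2) are indeed formal.

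The gap is in the globalization, which is the crux. You propose to show the orbitwise co-$t$-structures are ``compatible with $Ri_{C,*}$ and $Ri_C^!$'' and then invoke ``the appendix's gluing machinery.'' But for a non-closed orbit $C$, $Ri_{C*}$ does not even land in $\Db X$ (pushforward along a non-affine, non-closed locally closed immersion destroys coherence -- e.g.\ $i_{U*}\cO_U$ for the open orbit $U\subset\bP^1$), and the open-restriction functor has no coherent adjoints; this is exactly the obstruction named before Theorem~\ref{thm:tstruc}, so there is no recollement-style gluing of orbitwise structures to appeal to, and Appendix~\ref{sect:baric} supplies no such machinery for co-$t$-structures. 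The route actually taken in~\cite{at:pdtss} is different in order: one first establishes the \emph{global} baric structure of Theorem~\ref{thm:baric} for $q=\llcorner r\lrcorner$ (itself the hard gluing step, done in~\cite{at:bstc} via the hereditary/local/rigid properties rather than adjoints), and only then manufactures the co-$t$-structure from it. Concretely, Proposition~\ref{prop:cohom}(3) identifies $\ru\Dkl Xw$ with the complexes whose $k$-th cohomology sheaf lies in ${}_{\llcorner r\lrcorner}\cgl X{w+k}$; the triangle in~(4) is then built globally by inducting on the length of the cohomological (Postnikov) filtration of $\cF$ and applying the functorial baric truncations $\q\beta_{\le w+k}$ to each layer $h^k(\cF)[-k]$, with the splicing of the resulting triangles justified by the baric Hom-vanishing together with that of the standard $t$-structure; axiom~(3) follows from the same two vanishing statements. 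Your proof would be complete if you replaced the orbitwise gluing claim by this derivation from the already-global baric structure (taking Theorem~\ref{thm:baric} as input, or proving it).
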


Note that the inclusions $\Dkl Xw \subset \Dkl Xw[1]$ and $\Dkg Xw
\supset \Dkg Xw[1]$ are the opposite of what one would have in a
$t$-structure. Also in contrast with $t$-structures, the distinguished
triangle in part~(4) above is not functorial in general.

\begin{defn}
An object $\cF \in \Db X$ is \emph{skew-pure} of \emph{skew-degree $w$} if it belongs to $\Dkl Xw \cap \Dkg Xw$.
\end{defn}

\begin{thm}[Purity]\label{thm:purity}
Every staggered sheaf $\cF$ admits a canonical finite filtration
\[
\cdots \subset \cF_{w-1} \subset \cF_w \subset \cF_{w+1} \subset \cdots
\]
such that $\cF_w/\cF_{w+1}$ is skew-pure of skew degree $w$.  In particular, every simple staggered sheaf is skew-pure.
\end{thm}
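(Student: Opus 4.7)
The plan is to build the filtration inductively from the truncation triangles provided by the co-$t$-structure of the previous theorem. For $\cF \in \cM(X)$ and an integer $w$, apply part~(4) of that theorem to obtain a distinguished triangle
\[
\cF' \to \cF \to \cF'' \to
\]
with $\cF' \in \Dkl Xw$ and $\cF'' \in \Dkg X{w+1}$. One would like to declare $\cF_w := \cF'$, viewed as a subobject of $\cF$ inside $\cM(X)$; then the successive graded piece at index $w$ automatically sits in $\Dkl Xw \cap \Dkg Xw$ and is therefore skew-pure of skew degree $w$.

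The main obstacle is the claim that whenever $\cF \in \cM(X)$, both $\cF'$ and $\cF''$ still belong to $\cM(X)$. Nothing in the co-$t$-structure formalism forces this, and without it the candidate map $\cF' \to \cF$ is not a monomorphism in the abelian category $\cM(X)$. I would verify the claim orbit-by-orbit: form the long exact sequences of staggered cohomology sheaves attached to the triangle and match the step ranges imposed by $\ru\Dl X0$ and $\ru\Dg X0$ against those imposed by $\Dkl Xw$ and $\Dkg X{w+1}$. A key local ingredient is that any object of $\cg C$ lying simultaneously in $\cgl Cv$ and $\cgg Cv$ must be a direct sum of irreducibles of step exactly $v$, by complete reducibility of $\Rep{L_C}$ (property~\eqref{it:semis}); this rigidity pins the cohomology sheaves of $\cF'$ and $\cF''$ into precisely the step ranges demanded by the staggered $t$-structure.

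Once this compatibility of the two structures on staggered sheaves is in hand, the rest is formal. The distinguished triangle becomes a short exact sequence $0 \to \cF' \to \cF \to \cF'' \to 0$ in $\cM(X)$, and iterating in $w$ produces the claimed chain of subobjects, with graded pieces skew-pure of the correct skew degree by construction. Finiteness of the filtration follows from boundedness of $\cF$ together with finiteness of $\orb X$, which jointly restrict the nonzero cohomology sheaves $h^k(Li_C^*\cF)$ to finitely many pairs $(C,k)$. Canonicity is a consequence of the $\Hom$-vanishing axiom of the co-$t$-structure, which characterizes $\cF_w$ uniquely as the subobject of $\cF$ that lies in $\Dkl Xw$. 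Finally, a simple object of $\cM(X)$ admits no nontrivial subobjects, so at most one graded piece of its canonical filtration is nonzero; that piece equals $\cF$, and therefore $\cF$ is skew-pure, proving the second assertion.
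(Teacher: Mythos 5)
There is a genuine gap, and it sits exactly where the paper locates the difficulty. Your plan is to construct the filtration first, by showing that the co-$t$-structure truncations $\cF' \to \cF \to \cF''$ of a staggered sheaf are again staggered sheaves, and then to deduce skew-purity of simple objects as a corollary. But the claim that $\cF', \cF'' \in \cM(X)$ cannot be established by the orbitwise bookkeeping you describe. Already for $\cF = \cIC(C,\cL)$ simple, deciding whether a truncation lies in $\ru\Dl X0 \cap \ru\Dg X0$ --- or, equivalently for your purposes, whether $\cF$ itself lies in $\Dkl Xw \cap \Dkg Xw$ --- requires knowing in which step ranges the cohomology sheaves $h^k(Li_{C'}^*\cF)$ and $h^k(Ri_{C'}^!\cF)$ sit for boundary orbits $C' \subset \overline C \ssm C$, and there is no general method to compute these. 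The characterization of $\cIC(C,\cL)$ only provides the one-sided bounds $Li_{C'}^*\cF \in \ru\Dml {C'}{-1}$ and $Ri_{C'}^!\cF \in \ru\Dpg {C'}{1}$, which do not confine the boundary cohomology to the windows demanded by $\Dkl Xw$ and $\Dkg Xw$ (note also the sign discrepancy: the staggered conditions carry a ``$-k$'' while the skew conditions carry a ``$+k$''). Your local rigidity observation --- that $\cgl Cv \cap \cgg Cv$ consists of semisimple objects of step exactly $v$ --- is true but does not help, because the issue is not identifying objects already known to lie in both step ranges, but proving that the boundary restrictions lie in the required ranges at all. This is precisely the ``hard part'' the paper flags, and it is resolved by a different device entirely: one shows that $\cIC(C,\cL)$ lies in the heart of the pure-perverse $t$-structure on the baric-pure subcategory $\q\Dsp Xw$ for a suitable choice of $q$ and $w$. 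Once every simple object is known to be skew-pure, the canonical filtration of a general finite-length object follows formally from the $\Ext^1$-vanishing pattern among simples --- the opposite logical order from the one you propose.

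A secondary problem is canonicity. The paper explicitly notes that the truncation triangle of the co-$t$-structure is not functorial, and the axiom $\Hom(\Dkl Xw, \Dkg X{w+1}) = 0$ does not make the triangle unique: unlike for a $t$-structure, one does not also have vanishing of $\Hom$ from $\Dkl Xw$ into $\Dkg X{w+1}[-1] = \Dkg Xw$, which strictly contains $\Dkg X{w+1}$. So even granting that the truncations remain in $\cM(X)$, your construction does not by itself single out a canonical subobject $\cF_w$; canonicity must instead be extracted from the finite-length abelian structure of $\cM(X)$ after the simple objects have been handled.
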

\begin{proofcom}
The hard part of this is showing that a simple object is skew-pure, and the
difficulty is that there is no general method to compute the restriction of
$\cIC(C,\cL)$ to $\overline C \ssm C$.  See Section~\ref{subsect:barpur}.
\end{proofcom}

The skew degree of a simple staggered sheaf $\cF$ is given by
\[
\skdeg \cIC(C,\cL) = 2(\step \cL - r(C)) - \dim C.
\]
The next result follows immediately from properties of co-$t$-structures.

\begin{prop}\label{prop:hom}
Let $\cF$ and $\cG$ be simple staggered sheaves, with $\skdeg \cF = v$ and $\skdeg \cG = w$.  Then $\Hom^k(\cF,\cG) = 0$ for $k > v - w$.
\end{prop}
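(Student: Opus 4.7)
The plan is to deduce the vanishing directly from the four co-$t$-structure axioms, invoking Theorem~\ref{thm:purity} only to justify that simple staggered sheaves are skew-pure. No new technical input is needed beyond these two ingredients, so I expect there to be essentially no serious obstacle.

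First, I would translate the hypothesis ``$\cF$, $\cG$ simple staggered sheaves of skew-degree $v$, $w$'' into co-$t$-structure membership. By the second assertion of Theorem~\ref{thm:purity}, every simple staggered sheaf is skew-pure, so
\[
\cF \in \Dkl Xv \cap \Dkg Xv,
\qquad
\cG \in \Dkl Xw \cap \Dkg Xw.
\]
In particular $\cF \in \Dkl Xv$ and $\cG \in \Dkg Xw$.

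Next I would push the shift through the co-$t$-structure. By part (2) of the previous theorem, shifting by $1$ increases the skew-degree index by $1$, so iterating gives $\cG[k] \in \Dkg X{w+k}$. If $k > v - w$, then $w + k \geq v + 1$, and then part (1) (monotonicity of the $\Dkg$ filtration, $\Dkg X{w+k} \subset \Dkg X{v+1}$) yields $\cG[k] \in \Dkg X{v+1}$.

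Finally, the orthogonality in part (3) of the previous theorem, applied to $\cF \in \Dkl Xv$ and $\cG[k] \in \Dkg X{v+1}$, gives
\[
\Hom^k(\cF,\cG) = \Hom(\cF,\cG[k]) = 0,
\]
as desired. The only point that requires any care is the bookkeeping of indices (noting the strict inequality $k > v - w$ corresponds exactly to the strict gap between $\Dkl Xv$ and $\Dkg X{v+1}$); there is no genuine obstacle here.
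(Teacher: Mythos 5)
Your argument is correct and is exactly the one the paper intends: it notes that the proposition ``follows immediately from properties of co-$t$-structures,'' and your proof spells out precisely that deduction (skew-purity of simples via Theorem~\ref{thm:purity}, the shift axiom, monotonicity, and orthogonality). The index bookkeeping is right, so there is nothing to add.
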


Here, $\Hom^k(\cF,\cG)$ is a synonym for $\Hom(\cF, \cG[k])$.  It
can be identified with $\Ext^k(\cF,\cG)$ when $k \le 1$ (but not in general
when $k > 1$).  Thus, the proposition above includes an $\Ext^1$-vanishing
condition.  A stronger statement about $\Ext^1$-groups will appear in
Corollary~\ref{cor:ext1}.  See also Section~\ref{subsect:high-ext}.

\begin{thm}[Decomposition]\label{thm:decomp}
Every skew-pure object in $\Db X$ is a direct sum of shifts of simple staggered sheaves.
\end{thm}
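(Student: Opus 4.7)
The plan is to imitate the standard proof of the BBD decomposition theorem for pure perverse sheaves. The argument splits into two stages: first, show that every skew-pure \emph{staggered sheaf} is semisimple; then, show that every skew-pure object of $\Db X$ splits as a direct sum of shifts of its staggered cohomology sheaves. Both stages are driven entirely by the $\Hom$-vanishing in Proposition~\ref{prop:hom}.

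For the first stage, let $\cF \in \cM(X)$ be skew-pure of skew-degree $w$. The purity filtration of Theorem~\ref{thm:purity} is canonical, hence functorial, so the filtration of $\cF$ collapses to a single nonzero graded piece at skew-degree $w$; by functoriality the same collapse must occur for every subquotient of $\cF$, so every simple composition factor of $\cF$ is itself skew-pure of skew-degree $w$. For any two such simples $\cG_1,\cG_2$, Proposition~\ref{prop:hom} with $v=w$ yields $\Ext^1(\cG_1,\cG_2) = \Hom^1(\cG_1,\cG_2) = 0$, so $\cF$ is a direct sum of such simples.

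For the second stage, let $\cF \in \Db X$ be skew-pure of skew-degree $w$, and induct on the amplitude of its staggered cohomology. The inductive step rests on two ingredients. The first, and the main obstacle, is the assertion that each staggered cohomology sheaf $h^i_{\cM}(\cF)$ is itself skew-pure, of skew-degree $w+i$, so that the first stage renders it semisimple. This compatibility between the staggered $t$-structure and the skew-degree co-$t$-structure is not formal: it requires the finer orbit-by-orbit machinery supplied by the baric structures of Appendix~\ref{sect:baric}, ultimately resting on complete reducibility~\eqref{it:semis} for $L_C$-representations. Granting it, let $i_0$ be the largest index with $h^{i_0}_{\cM}(\cF) \ne 0$; inductively $\tau^{\le i_0-1}\cF \simeq \bigoplus_{j < i_0} A_j[-j]$ with $A_j$ semisimple of skew-degree $w+j$. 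The distinguished triangle
\[
\tau^{\le i_0 - 1}\cF \to \cF \to h^{i_0}_{\cM}(\cF)[-i_0] \to \tau^{\le i_0-1}\cF[1]
\]
splits provided the connecting class vanishes; its $A_j$-component lies in $\Hom^{i_0+1-j}(h^{i_0}_{\cM}(\cF), A_j)$. Since the skew-degree gap between source and target is $i_0 - j < i_0 + 1 - j$, Proposition~\ref{prop:hom} kills each such group, the triangle splits, and $\cF$ decomposes as desired.
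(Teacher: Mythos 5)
The paper omits the proof of this theorem entirely, and the route it points to (Appendix~\ref{sect:baric}, \S\ref{subsect:barpur}, following~\cite{at:pdtss}) is not the one you take: there one first constructs the purified and pure-perverse $t$-structures on the triangulated subcategories $\q\Dsp Xw$ of baric-pure objects, proves \emph{baric} versions of Purity and Decomposition by showing the relevant heart is semisimple, and then deduces the skew-pure statement. Your proposal instead transplants the BBD argument for pure complexes directly to the co-$t$-structure. The skeleton is sound, and the numerology checks out: in stage one, Proposition~\ref{prop:hom} with $v=w$ does give $\Ext^1$-vanishing between simples of equal skew-degree; in stage two the connecting class lies in $\Hom^{i_0-j+1}$ between semisimple objects whose skew-degrees differ by $i_0-j$, so Proposition~\ref{prop:hom} kills it.

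However, two substantive lemmas are left unproved. You flag the second: that the staggered truncation functors preserve $\Dkl Xw$ and $\Dkg Xw$, so that the staggered cohomology of a skew-pure object is skew-pure and, just as importantly, so that the inductive hypothesis applies to $\tau^{\le i_0-1}\cF$ at all. This is genuinely where the baric machinery must enter; since the co-$t$-structure truncation triangle is not functorial, the BBD argument that perverse truncation preserves weights does not transpose verbatim. You do not flag the first: in stage one, ``canonical, hence functorial'' is not a proof. What is needed is that the purity filtration of Theorem~\ref{thm:purity} is \emph{strictly} compatible with morphisms (the analogue of BBD's Th\'eor\`eme~5.3.5 on weight filtrations), so that subobjects and quotients of a skew-pure staggered sheaf are again skew-pure of the same degree; this itself requires an argument from the $\Hom$-vanishing, and as written your collapse-of-the-filtration step assumes it. With those two lemmas supplied your proof would go through, so the proposal is best read as a correct reduction of the Decomposition Theorem to them rather than a complete alternative proof.
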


\section{Quasi-hereditary Abelian Categories}
\label{sect:quasi}

Let $\cA$ be a finite-length $\Bbbk$-linear abelian category, and let $S$ be the set of isomorphism classes of simple objects.  Assume $S$ is endowed with a fixed total order $\le$.  For each $s \in S$, fix a representative object $L_s$.  Assume also that $\End(L_s) \simeq \Bbbk$ for each $s \in S$.

\begin{defn}
An object $M_s$ together with a surjective morphism $\phi_s: M_s \to L_s$ is called a \emph{standard cover} if
\begin{enumerate}
\item Every simple subquotient of $\ker \phi_s$ is isomorphic to some $L_t$ with $t < s$.
\item $\Hom(M_s, L_t) = \Ext^1(M_s,L_t) = 0$ for all $t < s$.
\end{enumerate}
Dually, an object $N_s$ together with an injective morphism $\psi_s: L_s \to N_s$ is called a \emph{costandard hull} if
\begin{enumerate}
\item Every simple subquotient of $\cok \psi_s$ is isomorphic to some $L_t$ with $t < s$.
\item $\Hom(L_t, N_s) = \Ext^1(L_t, N_s) = 0$ for all $t < s$.
\end{enumerate}
\end{defn}

Standard arguments show that standard covers and costandard hulls, when they exist, are unique up to canonical isomorphism.  As noted at the end of Section~\ref{sect:perv}, the motivating example of a standard object is a Verma modules in category $\cO$ for a complex semisimple Lie algebra.

Note that a standard cover of $L_s$ is a projective cover within the smaller category $\cA_{\le s}$ generated by objects $\{ L_t \mid t \le s\}$.  Similarly, a costandard hull is an injective hull within $\cA_{\le s}$.

\begin{defn}
The category $\cA$ is said to be \emph{quasi-hereditary} if
\begin{eqenum}
\item Every simple object admits a standard cover and a costandard hull.
\end{eqenum}
\end{defn}

This definition, taken from~\cite{bez:pcs}, is not the most common one. 
Many authors impose additional conditions, cf. Section~\ref{subsect:recip}.

\begin{thm}
\begin{enumerate}
\item $\cM(X)$ is quasi-hereditary.
\item $\cM(X)$ has enough projectives and enough injectives.
\end{enumerate}
\end{thm}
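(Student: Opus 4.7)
The plan is to establish~(1) by constructing explicit standard covers $\Delta(C,\cL)$ and costandard hulls $\nabla(C,\cL)$ for each simple $\cIC(C,\cL)$, and then to deduce~(2) by a standard iterative argument valid in any finite-length quasi-hereditary category. The partial order on simples is $(C',\cL') \le (C,\cL)$ whenever $C' \subseteq \overline{C}$, refined arbitrarily within each orbit; this is well-founded because $\orb X$ is finite.

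First I would carry out the construction of $\Delta$ and $\nabla$. Fix $(C,\cL)$, set $U = X \ssm (\overline{C} \ssm C)$, and let $j: U \hto X$ and $i: C \hto U$ be the inclusions. Inside $\cM(U)$, the object $i_*(\cL[\step\cL - r(C)])$ is simple and coincides with $\cIC(C,\cL)|_U$. To extend it to $X$ one cannot literally apply $j_!$ or $j_*$ in the coherent setting, as noted before Theorem~\ref{thm:tstruc}. Instead, the baric structures of Appendix~\ref{sect:baric} supply truncation functors that serve as coherent surrogates for the missing adjoints; combined with the $\ru H^0$ of the staggered $t$-structure of Theorem~\ref{thm:tstruc}, they produce well-defined objects $\Delta(C,\cL), \nabla(C,\cL) \in \cM(X)$, equipped with a surjection $\Delta(C,\cL) \twoheadrightarrow \cIC(C,\cL)$ and an injection $\cIC(C,\cL) \hto \nabla(C,\cL)$ whose kernel and cokernel, respectively, are supported on $\overline{C} \ssm C$.

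Next I would verify the standard-cover axioms. Every composition factor of $\ker(\Delta(C,\cL) \to \cIC(C,\cL))$ is of the form $\cIC(C',\cL')$ with $C' \subsetneq \overline{C}$, by the support statement. The vanishing of $\Hom(\Delta(C,\cL), \cIC(C',\cL'))$ and $\Ext^1(\Delta(C,\cL), \cIC(C',\cL'))$ for all such $(C',\cL')$ is the coherent analogue of the fact that a $j_!$-extension admits no nontrivial maps into sheaves supported on the complement; here it follows from the baric-adjunction property together with the strict staggered-negativity of $Li_{C'}^*\Delta(C,\cL)$ that is built into the construction. This is parallel in spirit to the $\Hom$-group calculation indicated in the proof sketch in Section~\ref{sect:stagrep}. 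The costandard hull $\nabla(C,\cL)$ is handled dually, appealing to Serre--Grothendieck duality (Theorem~\ref{thm:duality}) to convert statements about $\Delta$ for the perversity $\bar r$ into statements about $\nabla$ for $r$.

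For part~(2), I would invoke the standard iterative construction of projective covers available in any finite-length quasi-hereditary category: starting from $\Delta(C,\cL)$ and processing $(C',\cL') < (C,\cL)$ in decreasing order, one forms a universal extension by copies of the already-constructed projective cover $P(C',\cL')$ to annihilate $\Ext^1$ into $\cIC(C',\cL')$. The procedure terminates because $\orb X$ is finite and each extension only introduces strictly smaller composition factors. Injective hulls are then produced by applying $\sgD$ and invoking Theorem~\ref{thm:duality}. The main obstacle throughout is the first step: constructing $\Delta$ and $\nabla$ despite the absence of coherent $j_!$ and $j_*$ adjoints. The baric-structure formalism of Appendix~\ref{sect:baric} exists precisely to resolve this, providing replacements for the missing functors that preserve boundedness in $\Db X$. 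Once these are in hand, the remainder of the argument is a faithful, if technically delicate, adaptation of the perverse-sheaf proof of~\eqref{it:aff-qh}.
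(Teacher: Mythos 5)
Your central construction is where the argument diverges from what actually works. The baric truncation functors $\q\beta_{\le w}$, $\q\beta_{\ge w}$ are endofunctors of $\Db X$: they do not produce extensions of objects from $U$ to $X$, and the absence of any functorial extension $\Db U \to \Db X$ is exactly the obstruction you are trying to get around. So the sentence asserting that baric truncations ``combined with $\ru H^0$ \dots{} produce well-defined objects $\Delta(C,\cL)$, $\nabla(C,\cL)$'' is the entire content of the theorem, not a routine step: you would have to choose some extension in $\Db X$, prove the result is independent of that choice, and then show that the output satisfies the $\Hom$- and $\Ext^1$-vanishing axioms --- and the standard cover is in general not a baric truncation of anything (baric truncations enter in the construction of $\cIC(C,\cL)$ itself, whereas standard and costandard objects are characterized by adjunction-type vanishing). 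The mechanism the paper indicates is different: one proves that the exact functor of \emph{abelian} categories $j^*\colon \cM(X) \to \cM(U)$ admits a left adjoint and a right adjoint, and $M(C,\cL)$ and $N(C,\cL)$ are these adjoints applied to the simple object of $\cM(U)$; the adjunction then delivers the required vanishing of $\Hom$ and $\Ext^1$ into and out of objects supported on $\overline C \ssm C$ essentially for free. Your proposal never establishes, or even states, the existence of these adjoints, which is the key point of \cite{a:qhpss}.

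Part~(2) has a separate gap. The ``standard iterative argument valid in any finite-length quasi-hereditary category'' is precisely the implication of \cite[Theorem~3.2.1]{bgs:kdprt}, and it requires the extra hypotheses \eqref{it:finsimp} (finitely many simples) and \eqref{it:hered} ($\Ext^2$-vanishing between standards and costandards); Section~\ref{subsect:recip} states explicitly that both fail for $\cM(X)$ in general. Each orbit carries infinitely many irreducible equivariant vector bundles, so your universal-extension step could a priori involve infinitely many simples $(C',\cL')$ at each stage (the skew-degree constraint of Corollary~\ref{cor:ext1} does not cut this down to a finite set when $L_C$ is not a torus), and termination cannot be deduced from the finiteness of $\orb X$ alone. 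The existence of enough projectives and injectives therefore needs an argument that does not pass through the generic quasi-hereditary machinery.
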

\begin{proofcom}
The proof of~\eqref{it:aff-qh} for perverse sheaves makes use of the
functors $j_!$ and $j_*$, where $j: U \hto X$ is an open inclusion.  These
are unavailable in the coherent setting.  However, it turns out that the
functor of abelian categories $j^*: \cM(X) \to \cM(U)$ has adjoints on
both sides.  These adjoints can be used in place of $j_!$ and $j_*$.
\end{proofcom}

For a simple staggered sheaf $\cIC(C,\cL)$, let $M(C,\cL)$ and $N(C,\cL)$ denote its standard cover and costandard hull, respectively.

\begin{thm}\label{thm:costd-struc}
Let $w = \skdeg \cIC(C,\cL)$.
\begin{enumerate}
\item The kernel of $M(C,\cL) \to \cIC(C,\cL)$ is skew-pure of skew degree $w - 1$.
\item The cokernel of $\cIC(C,\cL) \to N(C,\cL)$ is skew-pure of skew degree $w + 1$.
\end{enumerate}
\end{thm}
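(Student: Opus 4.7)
The plan is to reduce the two parts of the theorem to one another via Serre--Grothendieck duality, then prove the remaining statement using the $\Ext^1$-vanishing of Corollary~\ref{cor:ext1} combined with extension-closure of the skew-pure subcategory. For the reduction, apply $\sgD$ to the short exact sequence $0 \to K \to M(C,\cL) \to \cIC(C,\cL) \to 0$ defining $K$. By Theorem~\ref{thm:duality}, $\sgD\cIC(C,\cL) \simeq \bru\cIC(C,\cL^\vee)$, and by uniqueness of costandard hulls (with a compatible choice of ordering on simples of $\bru\cM(X)$), $\sgD M(C,\cL) \simeq \bru N(C,\cL^\vee)$. Thus $\sgD K$ is the cokernel appearing in~(2) for the dual perversity. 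A calculation from the formulas $\skdeg \cIC(C,\cL) = 2(\step \cL - r(C)) - \dim C$, $\bar r(C) = \step\cano C - \dim C - r(C)$, and $\step\cL^\vee = \step\cano C - \step\cL$ shows that $\skdeg \bru\cIC(C,\cL^\vee) = -w$; combined with Theorem~\ref{thm:coduality}, this makes~(1) for perversity $r$ equivalent to~(2) for perversity $\bar r$, so it suffices to prove~(1).

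Since both $\ru\Dkl X{w-1}$ and $\ru\Dkg X{w-1}$ are extension-closed in $\Db X$, it will suffice to show that every simple composition factor $L_t = \cIC(C',\cL')$ of $K$ has $\skdeg L_t = w - 1$. The cosocle of $K$ is controlled at once: the defining properties of a standard cover give $\Hom(M(C,\cL), L_t) = 0$ and $\Ext^1(M(C,\cL), L_t) = 0$ for $t < s$, so the long exact $\Ext$-sequence yields $\Hom(K, L_t) \simeq \Ext^1(\cIC(C,\cL), L_t)$, and by Corollary~\ref{cor:ext1} this vanishes unless $\skdeg L_t = w - 1$. Hence the cosocle of $K$ consists only of simples of the required skew degree.

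The main obstacle is to upgrade this cosocle computation to all composition factors of $K$. My plan is to use the construction of $M(C,\cL)$ as $\tilde j_! k_*(\cL[\step\cL - r(C)])$, where $C \overset{k}{\hto} U = X \ssm (\overline C \ssm C) \overset{\tilde j}{\hto} X$ and $\tilde j_!$ is the left adjoint of $\tilde j^* : \ru\cM(X) \to \ru\cM(U)$ promised in the quasi-hereditary proof. Adjunction and $\tilde j^*\cIC(C,\cL) \simeq k_*(\cL[\step\cL - r(C)])$ give $\tilde j^* K = 0$, so $K$ is supported on the closed subset $\overline C \ssm C$, which contains strictly fewer orbits than $X$. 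Combining this support reduction with the baric structure machinery of Appendix~\ref{sect:baric} (which gives skew-degree control on restrictions to boundary orbits) and iterating the cosocle argument in an induction on the closure order of orbits $C' \subset \overline C \ssm C$ should pin down every composition factor of $K$ to skew degree $w - 1$, whereupon the extension-closure argument concludes the proof.
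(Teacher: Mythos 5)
The paper states this theorem without proof (the argument lives in~\cite{a:qhpss}), so there is no in-text proof to compare against; judged on its own terms, your proposal contains a genuine gap, in two places. First, a circularity: Corollary~\ref{cor:ext1} is stated \emph{after} Theorem~\ref{thm:costd-struc}, and the paper says explicitly that its proof uses standard and costandard objects --- it is in fact the standard consequence of Theorem~\ref{thm:costd-struc} itself (skew-purity of $K$ plus the Decomposition Theorem forces $\Hom(K,L_t)$, and hence $\Ext^1(\cIC(C,\cL),L_t)$, to vanish unless $\skdeg L_t = w-1$). The only input available to you independently of the theorem is Proposition~\ref{prop:hom}, which for a simple quotient $L_t$ of $K$ yields only the one-sided bound $\skdeg L_t \le w-1$; the reverse inequality is precisely the hard content you are trying to establish, so invoking Corollary~\ref{cor:ext1} here assumes what is to be proved (unless you restructure everything as a simultaneous induction, which you have not done).

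Second, and more seriously, even granting Corollary~\ref{cor:ext1}, your argument controls only the cosocle of $K$. Skew-purity requires \emph{every} composition factor to have skew degree $w-1$ (equivalently, by the Decomposition Theorem, $K$ must be semisimple), and nothing you have said excludes a factor of skew degree $w-2$ sitting below the cosocle: an extension of a simple of skew degree $w-1$ by one of skew degree $w-2$ is exactly what Corollary~\ref{cor:ext1} permits. ``Iterating the cosocle argument'' does not go through, because the long-exact-sequence step rests on the defining vanishing $\Hom(M(C,\cL),L_t)=\Ext^1(M(C,\cL),L_t)=0$, a property of the standard cover that is not inherited by $K$ or by its radical. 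What is correct in your proposal: the duality reduction of~(1) to~(2) for the dual perversity (the bookkeeping $\skdeg\bru\cIC(C,\cL^\vee)=-w$ checks out against Theorem~\ref{thm:coduality}), the identification of the cosocle of $K$, and the observation that $\tilde j^*K=0$ so that $K$ is supported on $\overline C\ssm C$. But support conditions give no lower bound in skew degree. The missing ingredient is genuine two-sided control of $M(C,\cL)$ in the co-$t$-structure $(\Dkl Xw,\Dkg Xw)$, extracted from an actual construction of $M(C,\cL)$ via the baric truncation functors and the purified/pure-perverse $t$-structures of Section~\ref{subsect:barpur}; that is where the substance of the proof lies, and the final paragraph of your proposal acknowledges this obstacle rather than resolving it.
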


Note that by the Decomposition Theorem, the kernel and cokernel mentioned in this theorem are both necessarily semisimple.

We conclude with two results whose statements do not involve standard or costandard objects, but whose proofs do.

\begin{cor}\label{cor:ext1}
If $\Ext^1(\cIC(C,\cL),\cIC(C',\cL')) \ne 0$, then
\[
\skdeg \cIC(C',\cL') = \skdeg \cIC(C,\cL) - 1.
\]
\end{cor}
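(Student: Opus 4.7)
The plan is to combine the upper bound from Proposition~\ref{prop:hom} with the quasi-hereditary structure supplied by Theorem~\ref{thm:costd-struc}. Write $\cF = \cIC(C,\cL)$, $\cG = \cIC(C',\cL')$, $v = \skdeg \cF$, $w = \skdeg \cG$, and fix the total order on the simples of $\cM(X)$ underlying the quasi-hereditary structure. Proposition~\ref{prop:hom} with $k=1$, combined with the identification of $\Hom^1$ and $\Ext^1$, immediately gives $w \le v - 1$: the hypothesis $\Ext^1(\cF,\cG) \ne 0$ rules out $w \ge v$. In particular $\cF \not\simeq \cG$, so the two simples are strictly comparable.

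Suppose first that $\cG < \cF$. Theorem~\ref{thm:costd-struc}(1) supplies a short exact sequence
\[
0 \to K \to M(C,\cL) \to \cF \to 0
\]
with $K$ skew-pure of skew degree $v-1$. Applying $\Hom(-,\cG)$ yields the three-term exact sequence
\[
\Hom(K,\cG) \to \Ext^1(\cF,\cG) \to \Ext^1(M(C,\cL),\cG),
\]
whose last term vanishes by the defining property of the standard cover (using $\cG < \cF$). Thus $\Hom(K,\cG) \ne 0$. Now $K$ lies in the heart $\cM(X)$ and is skew-pure, so the Decomposition Theorem (Theorem~\ref{thm:decomp}) forces $K$ to be a direct sum of simple staggered sheaves of skew degree $v-1$; no nonzero cohomological shifts can occur, since a shifted simple leaves the heart. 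A nonzero map from this semisimple object to the simple $\cG$ identifies $\cG$ with one summand, giving $w = v - 1$.

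The case $\cF < \cG$ is handled dually. Theorem~\ref{thm:costd-struc}(2) gives $0 \to \cG \to N(C',\cL') \to Q \to 0$ with $Q$ skew-pure of skew degree $w+1$. Applying $\Hom(\cF,-)$ and using $\Ext^1(\cF,N(C',\cL')) = 0$ (the defining property of the costandard hull, since $\cF < \cG$) produces $\Hom(\cF,Q) \ne 0$. Just as before, $Q$ is a semisimple staggered sheaf all of whose simple summands have skew degree $w+1$, so $\cF$ is itself one of these summands, whence $v = w + 1$. In either case $w = v - 1$, which is the claim.

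The main subtlety is not the homological algebra but the input from the Decomposition Theorem: it is what guarantees that the skew-pure kernel $K$ and cokernel $Q$ are genuinely semisimple staggered sheaves whose simple constituents all lie in a single predicted skew degree, which is exactly what the Schur-type identification in each case requires. Everything else reduces to unwinding the quasi-hereditary axioms and reading off the three-term portion of the long exact sequence of $\Ext$.
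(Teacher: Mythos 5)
Your proof is correct and follows exactly the route the paper indicates: the upper bound $\skdeg \cG \le \skdeg \cF - 1$ comes from Proposition~\ref{prop:hom} together with the identification of $\Hom^1$ and $\Ext^1$, and the matching lower bound comes from the standard/costandard short exact sequences of Theorem~\ref{thm:costd-struc} combined with the Decomposition Theorem to identify the simple constituents of the skew-pure kernel and cokernel. Since the paper explicitly states that the proof of this corollary goes through standard and costandard objects, your argument is essentially the intended one and I have nothing to add.
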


\begin{cor}\label{cor:semis}
For any orbit $C \in \orb X$, the functor $\cIC(C,\cdot): \cM(C) \to \cM(X)$ embeds $\cM(C)$ as a semisimple Serre subcategory of $\cM(X)$.
\end{cor}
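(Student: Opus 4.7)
The plan is to check three things in turn: (i) the functor $\cIC(C,\cdot)$ is fully faithful, (ii) its essential image is semisimple, and (iii) that image is closed under subobjects, quotients, and extensions in $\cM(X)$. Since $\cM(C)$ is itself semisimple (Section~\ref{sect:stagrep}), with simple objects parametrized by the irreducible $G$-equivariant vector bundles $\cL$ on $C$, the essential image is generated under finite direct sums by the simple objects $\cIC(C,\cL)$ of $\cM(X)$ (Theorem~\ref{thm:simple}). So the entire corollary reduces to an $\Ext^1$-vanishing statement together with routine Hom-calculations.

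The crux is the $\Ext^1$-vanishing. By the skew-degree formula recorded immediately after Theorem~\ref{thm:purity},
\[
\skdeg \cIC(C,\cL) - \skdeg \cIC(C,\cL') = 2(\step \cL - \step \cL'),
\]
which is always even. Corollary~\ref{cor:ext1} asserts that a nonzero $\Ext^1$ between two simple staggered sheaves forces this difference to equal exactly $-1$; hence $\Ext^1_{\cM(X)}(\cIC(C,\cL),\cIC(C,\cL')) = 0$ for all pairs of irreducibles $\cL, \cL'$ on $C$. This immediately yields (ii) (a collection of simples with vanishing mutual $\Ext^1$ generates a semisimple subcategory) and the subtlest half of (iii), namely closure under extensions: any short exact sequence in $\cM(X)$ with outer terms in the image splits.

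Full faithfulness on simples is Schur's lemma applied in both $\cM(C)$ and $\cM(X)$ (the simples $\cIC(C,\cL)$ are simple in $\cM(X)$), and semisimplicity of $\cM(C)$ extends it to arbitrary objects. Closure under subobjects and quotients is then automatic: every object in the essential image is semisimple as an object of $\cM(X)$, so every subobject is a direct summand, hence a finite direct sum of the $\cIC(C,\cL)$'s, hence again in the image. The main potential obstacle would have been the $\Ext^1$-vanishing, but the parity of $\skdeg$ together with Corollary~\ref{cor:ext1} dispatches it cleanly; the rest is bookkeeping that exploits the semisimplicity of $\cM(C)$.
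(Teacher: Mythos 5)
Your argument is correct, and it isolates exactly what the paper identifies as the real content of the corollary: the vanishing of $\Ext^1$ in $\cM(X)$ between simples supported on the same orbit closure. The paper itself omits the proof, remarking only that it goes through the standard/costandard machinery (Theorem~\ref{thm:costd-struc}); your route instead treats Corollary~\ref{cor:semis} as a formal consequence of the already-stated Corollary~\ref{cor:ext1} combined with the parity observation that $\skdeg \cIC(C,\cL) - \skdeg \cIC(C,\cL') = 2(\step\cL - \step\cL')$ is even for two irreducibles on the same orbit, whereas a nonzero $\Ext^1$ would force it to be odd. This is a legitimate and cleaner packaging, though of course it is not independent of the standard/costandard technology, since Corollary~\ref{cor:ext1} itself rests on it. Two small points. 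First, there is a harmless sign slip: Corollary~\ref{cor:ext1} forces $\skdeg\cIC(C,\cL) - \skdeg\cIC(C',\cL') = +1$, not $-1$; either way the difference would be odd, so the parity contradiction stands. Second, in the full-faithfulness step you should say explicitly that the functor induces a unital $\Bbbk$-algebra map $\End_{\cM(C)}(\cL) \to \End_{\cM(X)}(\cIC(C,\cL))$ between two copies of $\Bbbk$ (the latter because $\cM(X)$ is quasi-hereditary in the sense of Section~\ref{sect:quasi}, which includes $\End$ of simples being $\Bbbk$), hence an isomorphism; together with the vanishing of $\Hom$ between non-isomorphic simples on both sides and additivity, this gives full faithfulness on all of the semisimple category $\cM(C)$. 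The remaining closure properties (subobjects, quotients, extensions) follow exactly as you say, using additivity of $\Ext^1$ to split any extension of objects in the image.
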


The semisimplicity of $\cM(C)$ itself was essentially stated in Section~\ref{sect:stagrep}.  The additional content of this corollary is that even in the larger category $\cM(X)$, there are no nontrivial extensions about simple objects supported on the same orbit closure.

\section{An Example}
\label{sect:exam}

Let $G$ denote the usual Borel subgroup of $PGL(2,\Bbbk)$:
\[
G = \left\{\begin{bmatrix} * & * \\ & * \end{bmatrix} \right\}
\Big/
\left\{\pm\begin{bmatrix} 1 & \\ & 1 \end{bmatrix} \right\}
\]
Let $X = \bP^1$, endowed with the obvious action of $G$.  (Note that $X$ is the flag variety for $PGL(2,\Bbbk)$.  $G$ has two orbits on $\bP^1$: a one-point orbit, denoted $Z$, and its complement, an open orbit isomorphic to $\bA^1$ and denoted $U$.  The isotropy groups are $H_Z = G$ and $H_U \simeq \Gm$.  Irreducible representations of both these groups are in bijection with $\Z$.  For $n \in \Z$, let $V_n \in \cg Z$ and $\cL_n \in \cg U$ denote the corresponding vector bundles on orbits.

Note that the torus $H_U \subset G$ acts linearly on $U$ with weight $1$. 
It can be deduced from this that $N^*(Z) \simeq \fu_Z \simeq V_1$.  It also
follows that for any two integers $n,m \in \Z$, there exists a line bundle
$\cF(n,m) \in \cg X$ such that $\cF(n,m)|_U \simeq \cL_n$ and
$i_Z^*\cF(n,m) \simeq V_m$.  The canonical bundle is $\cano X = \cF(-1,1)$,
and Serre--Grothendieck duality is given by
\[
\sgD(\cF(n,m)) \simeq \cF(-1-n,1-m)[1].
\]

We impose an $s$-structure on $X$ by requiring
$\step \cL_n = n$ and $\step V_n = -n$, and we give it the self-dual perversity $r: U \mapsto -1$, $Z \mapsto 0$.
Then, the simple staggered sheaves are given by
\[
\cIC(U,\cL_n) \simeq \cF(n,-n)[n+1]
\qquad\text{and}\qquad
\cIC(Z,V_n) \simeq V_n[-n].
\]
The only nontrivial extensions among simple objects are the following:
\begin{gather*}
0 \to \cIC(Z,V_{-n}) \to \cF(n,-n+1)[n+1] \to \cIC(U,\cL_n) \to 0 \\
0 \to \cIC(U,\cL_n) \to \cF(n,-n-1)[n+1] \to \cIC(Z,V_{-n-1}) \to 0
\end{gather*}
The middle term of each of those is both projective and injective.  In fact, we have
\begin{align*}
M(U,\cL_n) \simeq P(U,\cL_n) &\simeq \cF(n,-n+1)[n+1] \simeq I(Z,V_{-n}) \\
N(U,\cL_n) \simeq I(U,\cL_n) &\simeq \cF(n,-n-1)[n+1] \simeq P(Z,V_{-n-1})
\end{align*}
Since $Z$ is closed, the simple objects $\cIC(Z,V_n)$ are also standard and costandard.

The structure of the category $\cM(X)$ can be summarized in the following diagram, in which an arrow $\cF \to \cG$ means that $\Ext^1(\cG,\cF) \ne 0$.
\[
\xymatrix@=0pt{
\skdeg: & & -2 & -1 & 0 & 1 & 2 & 3 \\
U: &&&\cIC(U,-1)\ar[ddl] & & \cIC(U,0) \ar[ddl] & & \cIC(U,1)\ar[ddl] \\
&\cdots &&&&&&&\cdots \\
Z: && \cIC(Z,1) & & \cIC(Z,0) \ar[uul] & & \cIC(Z,-1) \ar[uul]
}
\]
It can also be checked by direct calculation that derived tensor products
\[
\cIC(Z,V_n) \Lotimes \cIC(Z,V_m)
\]
are skew-pure, of skew-degree $-2n-2m$.  The Decomposition Theorem tells us these must be semisimple; we have
\[
\cIC(Z,V_n) \Lotimes \cIC(Z,V_m)
\simeq
\cIC(Z, V_{n+m}) \oplus \cIC(Z, V_{n+m+1})[2].
\]
Further explicit examples appear in~\cite[\S 11]{a:sts}, \cite[\S
12]{at:pdtss}, and~\cite[\S 9]{a:qhpss}.  The example in~\cite[\S
12]{at:pdtss} gives another illustration of the Decomposition Theorem. 
The example in~\cite[\S 9]{a:qhpss} involves a nonsmooth variety.

\section{Further Questions}
\label{sect:ques}

\subsection{Reciprocity formulas}
\label{subsect:recip}

Let $\cA$ and $S$ be as in Section~\ref{sect:quasi}.
The notion of quasi-hereditarity often includes the following additional
assumptions:
\begin{eqenum}
\item The set $S$ of isomorphism classes of simple objects is finite.
\label{it:finsimp}
\item For any standard object $M_s$ and costandard object $N_t$,
$\Ext^2(M_s,N_t) = 0$. \label{it:hered}
\end{eqenum}
(Both of these fail for $\cM(X)$ in general.)  These extra conditions
always imply the existence of enough projectives and
injectives~\cite[Theorem~3.2.1]{bgs:kdprt} (see also~\cite{cps:fda}).  Moreover, every indecomposable projective admits a \emph{standard filtration}, i.e., a filtration whose subquotients are standard objects.
The multiplicities in a standard filtration obey the celebrated
\emph{Brauer--Humphreys reciprocity} formula:
\begin{equation}\label{eqn:recip}
(P_s : M_t) = [ N_t : L_s ].
\end{equation}
Perhaps the most famous instance of this is the one known as \emph{BGG
reciprocity} in category $\cO$ for a complex semisimple Lie algebra.

The known examples of staggered sheaves include one in which projectives do have standard filtrations (see~\cite[\S 9]{a:qhpss}) and others (including the one in Section~\ref{sect:exam}) in which they do not.  It would be nice to find a characterization of cases in which this happens, and therefore in which the reciprocity formula~\eqref{eqn:recip} holds.

\subsection{Higher Ext-groups}
\label{subsect:high-ext}

It can be seen in the smallest examples, including the one in Section~\ref{sect:exam}, that the bounded derived category of $\cM(X)$ need not be equivalent to $\Db X$, so care must be taken in passing from higher Hom-groups in $\Db X$ to higher Ext-groups for the abelian category $\cM(X)$.  If $\cF$ and $\cG$ are simple staggered sheaves with $\skdeg \cF \le \skdeg \cG$, Proposition~\ref{prop:hom} tells us that $\Hom^k(\cF,\cG) = 0$ for all $k > 0$, and then it follows from~\cite[Remarque~3.1.17]{bbd} that
\begin{eqenum}
\item $\Ext^k(\cF,\cG) = 0$
for all $k > 0$ if $\skdeg \cF \le \skdeg \cG$. \label{it:mixed}
\end{eqenum}
It is natural to ask what can be said about higher $\Ext$-groups when $\skdeg \cF > \skdeg \cG$.  It follows from~\eqref{it:mixed} that $\cM(X)$ is a \emph{mixed category} in the sense of~\cite[\S 4]{bgs:kdprt}.  In keeping with the themes of that paper, one may ask whether $\cM(X)$ is, in fact, a \emph{Koszul category}.  In other words, is it true that
\[
\Ext^k(\cF,\cG) = 0
\qquad
\text{if $\skdeg \cF - \skdeg \cG \ne k$?}
\]
Corollary~\ref{cor:ext1} says this holds for $k = 1$; it seems reasonable to conjecture that it holds in general.

\subsection{Cohomological dimension of $\cM(X)$}
\label{subsect:cohom-dim}

It is well-known that the cohomological dimension of the category $\cg X$
is $\dim X$ if $X$ is smooth and infinite otherwise.  For staggered
sheaves, the situation is quite different: in the example considered in
Section~\ref{sect:exam}, $\cM(X)$ has infinite cohomological dimension, but
in every known example on an affine variety, including a nonsmooth
one~\cite[\S 9]{a:qhpss}, $\cM(X)$ has finite cohomological dimension.  It
would be interesting to find necessary and sufficient conditions for
$\cM(X)$ to have finite cohomological dimension, and to compute its
dimension in those cases.

\appendix
\section{Baric Structures}
\label{sect:baric}

\subsection{Overview}
\label{subsect:barover}

A vital role in the proofs of all the main results on staggered sheaves is
played by the notion of a \emph{baric structure}, introduced by the
author and D.~Treumann in~\cite{at:bstc}.  The motivating example of a
baric structure appeared earlier, in work of S.~Morel, in the $\ell$-adic
setting~\cite{mor:cic}.  The definition is given as part of Theorem~\ref{thm:baric} below.

The paper~\cite{at:bstc} includes an extensive study of so-called \emph{HLR baric structures}.  These baric structures are \emph{hereditary} (well-behaved on closed subschemes), \emph{local} (well-behaved on open subschemes), and \emph{rigid} (well-behaved on nilpotent thickenings).  In additional to the general theory, that paper includes a construction of a specific family of HLR baric structures, which we now describe.

Let $q: \orb X \to \Z$ be a function, and define a collection of full
subcategories of $\Dm X$ and $\Dp X$, respectively, as follows:
\begin{align*}
\q\Dmsl Xw &= \{ \cF \mid \text{$h^k(Li_C^*\cF) \in \cgl C{q(C)+w}$ for all
$C \in \orb X$ and all $k \in \Z$} \}, \\
\q\Dpsg Xw &= \{ \cF \mid \text{$h^k(Ri_C^!\cF) \in \cgg C{q(C)+w}$ for all
$C \in \orb X$ and all $k \in \Z$} \}.
\end{align*}
As usual, let $\q\Dsl Xw$ and $\q\Dsg Xw$ denote the bounded versions of
these categories.  These categories are stable under shift.  Over a single orbit, they look like this:
\[
\q\Dsl Cw: \incgr{baricneg}
\qquad
\q\Dsg Cw: \incgr{baricpos}
\]
These categories exhibit a duality property similar to those in Sections~\ref{sect:basic} and~\ref{sect:puredec}.

\begin{thm}\label{thm:barduality}
$\sgD(\q\Dsl Xw) = \cheq\Dsg X{-w}$ and $\sgD(\q\Dsg Xw) = \cheq\Dsl X{-w}$, where $\hat q(C) = \step \cano C - q(C)$.
\end{thm}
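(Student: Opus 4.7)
The plan is to reduce, stratum by stratum, to a single-orbit statement about how Serre--Grothendieck duality interacts with step constraints, and then to verify that statement by d\'evissage from the case of a single vector bundle.

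First, factoring $i_C = \iota \circ j$ with $\iota: \overline C \hto X$ closed and $j: C \hto \overline C$ open, the standard intertwinings $\sgD_{\overline C} \circ R\iota^! \simeq L\iota^* \circ \sgD_X$ and $\sgD_C \circ Lj^* \simeq Lj^* \circ \sgD_{\overline C}$ (using $j^* = j^!$ for open immersions) combine to give a natural isomorphism $\sgD_C \circ Li_C^* \simeq Ri_C^! \circ \sgD_X$, where $\sgD_C$ denotes Serre--Grothendieck duality on the orbit $C$. Thus the defining condition of $\cheq\Dsg X{-w}$---that $h^k(Ri_C^! \sgD_X \cF) \in \cgg C{\hat q(C) - w}$ for all $C$ and $k$---translates via $\sgD_C$ into an equivalent condition on the orbit restriction $Li_C^*\cF \in \Db{C}$.

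Next I would prove the purely orbit-level statement $\sgD_C(\q\Dsl Cw) = \cheq\Dsg C{-w}$. The key calculation is: for an irreducible $\cL \in \cg C$ with $\Cento(L_C)$-character $\chi_\cL$, one has $\cL^\vee = \cHom(\cL,\cano C) \simeq \cL^* \otimes \cano C$, so $\chi_{\cL^\vee} = \chi_{\cano C} - \chi_\cL$ and hence $\step \cL^\vee = \step \cano C - \step \cL$. Since every equivariant coherent sheaf on the homogeneous space $C \simeq G/H_C$ is locally free, the functor $\cF \mapsto \cF^\vee$ is exact on $\cg C$; it carries simple subquotients of step $s$ to simple subquotients of step $\step \cano C - s$, and therefore interchanges the Serre subcategories $\cgl C{q(C)+w}$ and $\cgg C{\hat q(C) - w}$.

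To upgrade from a single sheaf to a bounded complex $\cG \in \Db{C}$, I would induct on the number of nonzero cohomology sheaves via the truncation triangle $\tau^{\le n-1}\cG \to \tau^{\le n}\cG \to h^n(\cG)[-n] \to$. Applying $\sgD_C$ produces a distinguished triangle whose outer terms, by the inductive hypothesis together with the single-sheaf identity $\sgD_C(h^n(\cG)[-n]) \simeq h^n(\cG)^\vee[\dim C + n]$, have every cohomology sheaf in $\cgg C{\hat q(C) - w}$; the long exact sequence of cohomology and the Serre-subcategory property of $\cgg C{\hat q(C) - w}$ then force the same conclusion for $\sgD_C \tau^{\le n}\cG$. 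The reverse inclusion follows from involutivity $\sgD\sgD \simeq \mathrm{id}$, and the companion identity $\sgD(\q\Dsg Xw) = \cheq\Dsl X{-w}$ drops out by the symmetric argument with $Li_C^*$ and $Ri_C^!$ interchanged. The main obstacle is not conceptual but rather bookkeeping around the shift $\omega_C \simeq \cano C[\dim C]$: keeping the interaction of this cohomological shift with the step-shift conventions straight is what forces the clean formula $\hat q(C) = \step \cano C - q(C)$.
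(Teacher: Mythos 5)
Your argument is correct and is essentially the standard one (the paper, being expository, omits the proof; the argument in the underlying references likewise reduces to orbits via the compatibility $\sgD_C\circ Li_C^*\simeq Ri_C^!\circ\sgD_X$ together with the computation $\step\cL^\vee=\step\cano C-\step\cL$). One small refinement: since $Li_C^*\cF$ lies only in $\Dm C$ for general $\cF\in\Db X$, your induction on the number of nonzero cohomology sheaves does not literally apply, but because every object of $\cg C$ is locally free the functor $(-)^\vee$ is exact and $\sgD_C$ is just termwise dualization up to the shift $[\dim C]$, which gives the orbit-level statement directly, with no d\'evissage and with no boundedness hypothesis.
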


\begin{thm}\label{thm:baric}
The collection of subcategories $(\{\q\Dsl Xw\},\{\q\Dsg Xw\})$ forms a
baric structure on $\Db X$.  That is:
\begin{enumerate}
\item $\q\Dsl Xw \subset \q\Dsl X{w+1}$ and $\q\Dsg Xw \supset \q\Dsg
X{w+1}$.
\item If $\cF \in \q\Dsl Xw$ and $\cG \in \q\Dsg X{w+1}$, then
$\Hom(\cF,\cG) = 0$.
\item For any $\cF \in \Db X$, there is a distinguished triangle $\cF' \to
\cF \to \cF'' \to$ with $\cF' \in \q\Dsl Xw$ and $\cF'' \in \q\Dsg X{w+1}$.
\end{enumerate}
\end{thm}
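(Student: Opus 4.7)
\begin{proofcom}
The plan is to verify the three axioms in turn. Axiom~(1) is immediate from the definitions: ``step~${\le}\,w$'' is a stronger condition than ``step~${\le}\,w+1$'', so $\cgl C{q(C)+w} \subset \cgl C{q(C)+w+1}$, and dually for $\cgg$.

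For axiom~(2), I would induct on the number of orbits in $X$. In the single-orbit base case $X = C$, the map $i_C$ is the identity and $\Db C \simeq D^b(\Rep{H_C})$; by d\'evissage on cohomological degree and on irreducible subquotients, the claim reduces to showing $\Ext^k_{H_C}(V,W) = 0$ for all $k \ge 0$ whenever $V,W$ are irreducible $H_C$-representations with $\step V \le w$ and $\step W \ge w+1$. This is computed via the Hochschild--Serre spectral sequence for $1 \to U_C \to H_C \to L_C \to 1$: since $L_C$ is linearly reductive (a torus, in the solvable case), the spectral sequence collapses to $\Ext^k_{H_C}(V,W) \simeq (V^* \otimes W \otimes \wedge^k \fu_C^*)^{L_C}$, and the $s$-structure bound $\la\xi_C,\lambda\ra \le -1$ on weights of $\fu_C$ forces the total $\Cento(L_C)$-weight of this space to be strictly positive, ruling out invariants. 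For the inductive step, fix a closed orbit $C_0 \subset X$ with open complement $j : U \hto X$ and closed inclusion $i : C_0 \hto X$, and apply $\Hom(\cF,-)$ to the excision triangle $i_* Ri^!\cG \to \cG \to Rj_* j^*\cG \to$. By adjunction the two outer $\Hom$ groups become $\Hom_{C_0}(Li^*\cF, Ri^!\cG)$ and $\Hom_U(j^*\cF, j^*\cG)$, which vanish by the single-orbit case and the inductive hypothesis respectively.

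Axiom~(3) is the crux. On a single orbit, the same $s$-structure bound shows that $V_{\le w'} := \bigoplus_{\la\xi_C,\chi\ra \le w'} V_\chi$ is an $H_C$-subrepresentation of $V$ (the $U_C$-action can only \emph{lower} step), producing a short exact sequence $0 \to V_{\le w'} \to V \to V_{\ge w'+1} \to 0$ that is exact in $V$; applied termwise to a complex in $\Db C$ this yields the required truncation triangle, and the cohomology conditions pass correctly because the step decomposition is exact. For general $X$, one wants to truncate $Li^*\cF$ on $C_0$ by the single-orbit construction, truncate $j^*\cF$ on $U$ by induction, and glue. This gluing is the main obstacle: in the perverse-sheaf setting one would use the adjoint pair $j_!, Rj_*$ to extend truncations across $C_0$, but in the coherent setting $j^*$ has no left adjoint, and $Rj_*$ leaves $\Db X$ for $\Dp X$ with only quasicoherent cohomology. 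The remedy, carried out in \cite{at:bstc}, is to construct the truncation first in the larger categories $\Dp X$ or $\Dm X$---making systematic use of filtrations by powers of the ideal sheaves $\cI_{C_0}$ of orbit closures---and then to deduce boundedness and coherence \emph{a posteriori} from axiom~(2) together with the fact that $\cF$ itself lies in $\Db X$. The hereditary, local, and rigid properties of the resulting baric structure are precisely what make this descent possible.
\end{proofcom}
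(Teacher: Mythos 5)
The paper states this theorem without proof, deferring entirely to the construction in \cite{at:bstc}, and your sketch is faithful to that construction: part~(1) is trivial, part~(2) reduces by d\'evissage and adjunction to the single-orbit weight computation forced by $\la\xi_C,\lambda\ra\le -1$, and part~(3) rests on the exact step-truncation of $H_C$-representations on each orbit, glued across orbits via the ideal-sheaf arguments of \cite{a:sts, at:bstc}. The only imprecision is cosmetic: $\Ext^k_{H_C}(V,W)$ is a subquotient of $(V^*\otimes W\otimes\wedge^k\fu_C^*)^{L_C}$ rather than equal to it (and in positive characteristic one must use the solvability of $H_C$ in place of Lie algebra cohomology), but the weight bound kills the whole ambient space, so the asserted vanishing stands.
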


In a baric structure, as in a $t$-structure (but unlike in a
co-$t$-structure), the distinguished triangle in the last property above is
functorial.  Specifically, there are \emph{baric truncation functors}
\[
\q\beta_{\le w}: \Db X \to \q\Dsl Xw
\qquad\text{and}\qquad
\q\beta_{\ge w}: \Db X \to \q\Dsg Xw.
\]

\subsection{Staggering operation}
\label{subsect:barstag}

The baric structure above, like the $t$-structure of Section~\ref{subsect:stagdefn} and the co-$t$-structure of Section~\ref{sect:puredec}, is defined by ``orbitwise'' conditions, but all three of these admit an alternate description by ``cohomologywise'' conditions.  Define a subcategory of
$\cg X$ as follows:
\[
\q\cgl Xw = \{ \cF \mid \text{$i_C^*\cF \in \cgl C{q(C)+w}$ for all $C \in \orb X$} \}.
\]
The descriptions appearing in the proposition below were actually taken as definitions in~\cite{at:bstc, at:pdtss} (and the orbitwise descriptions required proof).

\begin{prop}\label{prop:cohom}
Let $\cF \in \Db X$.
\begin{enumerate}
\item $\cF \in \q\Dsl Xw$ if and only if $h^k(\cF) \in \q\cgl Xw$ for all
$k \in \Z$.
\item $\cF \in \ru\Dl Xw$ if and only if $h^k(\cF) \in {}_{r}\cgl
X{w-k}$ for all $k \in \Z$.
\item $\cF \in \ru\Dkl Xw$ if and only if $h^k(\cF) \in {}_{\llcorner r \lrcorner}\cgl X{w+k}$
for all $k \in \Z$, where
\[
 \llcorner r \lrcorner(C) = r(C) + \dim C.
\]
\end{enumerate}
\end{prop}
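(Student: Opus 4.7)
The plan is to reduce all three equivalences to a single cohomology-vs-Tor lemma controlling the step of $h^p(Li_C^*\cG)$ in terms of $i_C^*\cG$, and then to assemble each equivalence using the hypercohomology spectral sequence (backward direction) and a truncation-triangle induction (forward direction).

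First I would prove the key lemma: for $\cG \in \cg X$ with $i_C^*\cG \in \cgl C n$, we have $h^p(Li_C^*\cG) \in \cgl C{n+p}$ for all $p \le 0$, while $h^p(Li_C^*\cG) = 0$ for $p > 0$ by right-exactness of $i_C^*$. Since $C$ is a smooth $G$-orbit open in $\overline C$, I may work in a $G$-stable open neighborhood on which $\overline C$ is closed and smooth, locally cut out by a regular sequence; the Koszul resolution then identifies $h^{-k}(Li_C^*\cG)$ with $\mathrm{Tor}_k^{\cO_X}(\cO_C, \cG)$. The $\cI_C$-adic filtration on $\cG$ has successive quotients $\cI_C^j\cG/\cI_C^{j+1}\cG$ covered by $\mathrm{Sym}^j N^*(C) \otimes_{\cO_C} i_C^*\cG$, and combining this with the identity $\mathrm{Tor}_k^{\cO_X}(\cO_C, \cdot) \simeq \Lambda^k N^*(C) \otimes_{\cO_C}(\cdot)$ on $\cO_C$-modules exhibits $\mathrm{Tor}_k(\cO_C, \cG)$ as built from subquotients of $\Lambda^k N^*(C) \otimes \mathrm{Sym}^j N^*(C) \otimes i_C^*\cG$ for $j \ge 0$. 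Condition~\eqref{it:recsplit} of the $s$-structure forces $N^*(C) \in \cgl C{-1}$, so each such tensor product lies in $\cgl C{n-k-j} \subseteq \cgl C{n-k}$, proving the lemma.

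The backward direction of each part then follows from the standard spectral sequence
\[
E_2^{p,q} = h^p(Li_C^* h^q(\cF)) \Rightarrow h^{p+q}(Li_C^*\cF)
\]
of a bounded complex: applying the key lemma to each $h^q(\cF)$ and using closure of $\cgl C\bullet$ under extensions yields the required orbit-wise bound on $h^n(Li_C^*\cF)$. The three parts differ only in index bookkeeping: in part~(1) each $E_2^{p,q}$ lies in $\cgl C{q(C)+w+p} \subseteq \cgl C{q(C)+w}$ since $p \le 0$; in part~(2) the terms at total degree $n = p+q$ lie in $\cgl C{r(C)+w-n+2p} \subseteq \cgl C{r(C)+w-n}$; and in part~(3) the indices align exactly, placing $E_2^{p, n-p}$ in $\cgl C{r(C) + \dim C + w + n}$.

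The forward direction is by induction on the number of nonzero cohomology sheaves of $\cF$. Letting $n$ denote the top cohomological degree of $\cF$, applying $Li_C^*$ to the triangle $\tau_{<n}\cF \to \cF \to h^n(\cF)[-n] \to$ and using right-exactness of $i_C^*$ gives $h^n(Li_C^*\tau_{<n}\cF) = 0$, whence a surjection $h^n(Li_C^*\cF) \twoheadrightarrow i_C^*h^n(\cF)$. Since $\cgl C\bullet$ is closed under quotients, the orbit-wise hypothesis produces the required cohomology-wise bound on $h^n(\cF)$. The backward direction applied to the single sheaf $h^n(\cF)$ then places $h^n(\cF)[-n]$ in the appropriate derived subcategory, and the long exact sequence together with closure under extensions places $\tau_{<n}\cF$ there as well, allowing the induction to proceed. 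I expect the key lemma to be the main obstacle: one must track how the $\cI_C$-adic filtration on $\cG$ interacts with the Koszul resolution of $\cO_C$ to control the higher Tor sheaves, all while respecting the $G$-equivariant weight decomposition under $\Cento(L_C)$. Once that lemma is in hand, the rest of the proof reduces to the three index-bookkeeping checks above.
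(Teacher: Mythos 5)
Your overall architecture --- a Tor-estimate lemma on a single orbit, the hyperderived spectral sequence $E_2^{p,q}=h^p(Li_C^*h^q(\cF))\Rightarrow h^{p+q}(Li_C^*\cF)$ for one implication, and truncation-triangle induction for the other --- is the right one, and the index bookkeeping in all three parts checks out. (For the record, this note does not prove the proposition at all: it defers to \cite{at:bstc, at:pdtss}, where the cohomologywise descriptions are the \emph{definitions} and the orbitwise ones are the theorems; so there is no ``paper proof'' to match, only the cited ones.) Two remarks on precision before the main point: part~(3), and also the induction step placing $\tau_{<n}\cF$ back into $\ru\Dl Xw$ in part~(2), genuinely require the \emph{strong} form of your lemma (the bound $n+p$ rather than $n$); if in the induction you use only the conclusion ``$h^n(\cF)[-n]\in\ru\Dl Xw$'' rather than the lemma itself, the extension argument for $h^k(Li_C^*\tau_{<n}\cF)$ comes out off by one. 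This is repairable exactly because $k\le n-1$ there, but it should be said.

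The genuine gap is in the key lemma. The step ``$C$ is a smooth orbit, hence locally cut out by a regular sequence'' is false when $X$ is singular along $C$, and the setup explicitly allows this (assumption~(3.1) only asks that $X$ embed in a smooth $Y$; nonsmooth examples are invoked in \S\ref{subsect:nonred} and \S\ref{sect:exam}). Without a regular embedding the identification $\mathrm{Tor}_k(\cO_C,-)\simeq\Lambda^kN^*(C)\otimes(-)$ fails --- for the vertex of $\{xy=0\}$ one has $\mathrm{Tor}_k(\cO_C,\cO_C)\ne 0$ for every $k$ --- and $Li_C^*\cG$ is unbounded below, so your dévissage collapses. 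A second, smaller issue is that the $\cI_C$-adic filtration of $\cG$ does not terminate unless $\cG$ is supported on $\overline C$, so splicing it into Tor needs a completion or convergence argument rather than a finite induction. The robust replacement: pass to the ($G$-equivariant, $\Cento(L_C)$-weight-graded) formal neighborhood of $C$ and take a \emph{minimal} free resolution $F_\bullet\to\cO_C$, so $d(F_k)\subseteq\cI_CF_{k-1}$. Recessedness gives $N^*(C)=\cI_C/\cI_C^2\in\cgl C{-1}$, hence every weight of $\widehat{\cI_C}$ has step $\le -1$ (by the $\cI_C^j/\cI_C^{j+1}$ dévissage, which is legitimate on associated gradeds), and an induction on $k$ shows $F_k$ is generated in steps $\le -k$. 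Likewise every weight of $\widehat\cG$ has step $\le n$ when $i_C^*\cG\in\cgl Cn$. Then $\mathrm{Tor}_k(\cO_C,\cG)$, being an $\cO_C$-module subquotient of $F_k\otimes\widehat\cG$, lies in $\cgl C{n-k}$. This proves your lemma in the required generality; your Koszul computation is the special case where $X$ is smooth along $C$, in which case it is correct and the rest of your argument goes through.
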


This really is a one-sided statement: in general, membership in $\q\Dsg Xw$, $\ru\Dg X0$, or $\ru\Dkg Xw$ cannot directly be tested for on cohomology sheaves.  For $\q\Dsl Xw$, $\ru \Dl X0$, and $\ru\Dkl Xw$, however, it is now reasonable to draw pictures even in the many-orbit case, with the horizontal axis now indicating membership in a $\q\cgl Xw$ rather than step.
\[
\begin{array}{@{}c@{}c@{}c@{}}
\incgr{baricbneg} & \incgr{stagtbneg} & \incgr{cotbneg} \\
\q\Dsl Xw\quad & \ru\Dl X0\quad & \ru\Dkl Xw\quad
\end{array}
\]
The advantage of the descriptions in Proposition~\ref{prop:cohom} is that the relationship between $\q\Dsl Xw$ and $\ru\Dl X0$ can now be described in language that makes sense in any triangulated category.

The \emph{staggering operation} is a procedure that takes ``compatible''
baric and $t$-structures on a triangulated category, and produces from
them a new $t$-structure.  The latter is defined by cohomology conditions.  Theorem~\ref{thm:tstruc} is proved by invoking this general procedure.

\subsection{Baric purity}
\label{subsect:barpur}

The full subcategory $\q\Dsp Xw = \q\Dsl Xw \cap \q\Dsg Xw$, shown below for a single orbit, is a triangulated category in its own right, and one may ask whether it admits any interesting $t$-structures.
\[
\q\Dsp Cw: \incgr{baricpur}
\]
Two $t$-structures on $\q\Dsp Xw$ (known as the \emph{purified} and the \emph{pure-perverse} $t$-structures) are defined and studied in~\cite{at:pdtss}.  Neither of these is induced by one on $\Db X$, but they nevertheless turn out to be useful for studying particular objects.  Specifically, the key to proving the Purity Theorem, Theorem~\ref{thm:purity}, is showing that a simple staggered sheaf $\cIC(C,\cL)$ also lies in the heart of the pure-perverse $t$-structure on $\q\Dsp Xw$ for a suitable choice of $q$ and $w$.

Along the way, it is shown in~\cite{at:pdtss} that when the perversity functions obey certain inequalities, baric versions of the Purity and Decomposition Theorems hold.

\begin{thm}
Under certain assumptions on $r$ and $q$, the following hold.
\begin{enumerate}
\item Every staggered sheaf admits a canonical finite filtration with baric-pure subquotients.  In particular, every simple staggered sheaf is baric-pure.
\item Every baric-pure object in $\Db X$ is a direct sum of shifts of simple staggered sheaves.
\end{enumerate}
\end{thm}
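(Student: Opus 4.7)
The plan is to mirror the non-baric Purity and Decomposition Theorems (Theorems~\ref{thm:purity} and~\ref{thm:decomp}), but with the co-$t$-structure of Section~\ref{sect:puredec} replaced by the baric structure of Theorem~\ref{thm:baric}. The essential new ingredient, already singled out in Section~\ref{subsect:barpur}, is the pure-perverse $t$-structure on the triangulated subcategory $\q\Dsp Xw$.

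For part~(1), I would form for each staggered sheaf $\cF$ the tower of baric truncations $\q\beta_{\le w}\cF$. The preliminary step is to extract from the hypotheses on $r$ and $q$ that $\q\beta_{\le w}$ (and hence $\q\beta_{\ge w}$) preserves $\cM(X) \subset \Db X$; using the cohomology-level descriptions of Proposition~\ref{prop:cohom}, this should reduce to a numerical inequality tying $r$ to $q$. Once this is in place, $\{\q\beta_{\le w}\cF\}$ is a canonical filtration of $\cF$ in $\cM(X)$ whose successive quotients lie in $\cM(X) \cap \q\Dsp Xw$, hence are baric-pure; finiteness follows from boundedness of $\cF$ in the baric structure. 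Because the filtration is canonical, a simple $\cIC(C,\cL)$ must coincide with a single subquotient and is therefore baric-pure. The substantive content is to actually locate $\cIC(C,\cL)$ in $\q\Dsp Xw$ for the correct $w$: following the strategy outlined in Section~\ref{subsect:barpur}, one shows that $\cIC(C,\cL)$ lies in the heart of the pure-perverse $t$-structure on $\q\Dsp Xw$, using the strict-inequality characterization of $\cIC(C,\cL)$ on restrictions to $C' \subset \overline C \ssm C$ recalled in the remarks on Theorem~\ref{thm:simple}, together with the baric duality Theorem~\ref{thm:barduality} to handle the $Ri_{C'}^!$ side.

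For part~(2), let $\cF \in \q\Dsp Xw$ be baric-pure. I would apply the pure-perverse $t$-structure on $\q\Dsp Xw$ to take cohomology objects of $\cF$ inside that heart; by part~(1), the simple objects of that heart are precisely shifts of simple staggered sheaves. A bounded object in a semisimple abelian heart splits as the direct sum of shifts of its cohomology objects provided the relevant $\Hom^1$-groups between distinct simples vanish. These vanishings follow from Proposition~\ref{prop:hom} once the hypotheses on $r$ and $q$ are used to align baric and skew degrees: simples of equal baric weight end up with equal skew degree, and Proposition~\ref{prop:hom} kills the degree-$1$ extensions that would otherwise obstruct splitting.

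The chief obstacle is the step that places $\cIC(C,\cL)$ in $\q\Dsp Xw$, i.e., that simple staggered sheaves are baric-pure. There is no closed-form description of $Li_{C'}^*\cIC(C,\cL)$ or $Ri_{C'}^!\cIC(C,\cL)$ on deeper strata $C' \subset \overline C \ssm C$, so one has to argue inductively along the closure order, applying the baric truncation functors to bound these restrictions simultaneously in support and in baric weight. Pinning down the precise inequalities on $r$ and $q$ under which this induction closes up is the technical heart of the proof carried out in~\cite{at:pdtss}, and those inequalities are exactly the ``certain assumptions'' in the statement.
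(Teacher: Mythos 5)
The paper itself gives no proof of this theorem beyond citing \cite{at:pdtss} and identifying the pure-perverse $t$-structure on $\q\Dsp Xw$ as the key tool, and your sketch follows exactly that route: establish that the baric truncation functors $\q\beta_{\le w}$ preserve $\cM(X)$ under suitable inequalities relating $r$ and $q$, read off the canonical filtration and the baric purity of simples, and use the pure-perverse heart together with Hom-vanishing to split baric-pure objects. So this is essentially the same approach. One small imprecision worth noting: in part (2) you claim that ``by part (1)'' the simple objects of the pure-perverse heart are \emph{precisely} the shifts of simple staggered sheaves, but part (1) only supplies one containment; the reverse identification is a separate (true, but not automatic) step carried out in \cite{at:pdtss}.
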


\subsection{Nonreduced schemes}
\label{subsect:nonred}

Although $X$ was assumed to be a (reduced) variety throughout in the
present note, it is in fact essential to allow nonreduced schemes in general, since the scheme-theoretic support of a coherent sheaf on a variety may be nonreduced.  The ``rigidity'' property of HLR baric structures plays an important role here: for nonreduced $X$, membership in (for instance) $\q\Dsl Xw$ can be tested for after pulling back to $\q\Dsl {X_\red}w$, where $X_\red$ is the associated reduced scheme.  (This was not known when~\cite{a:sts} was written, so definitions in that paper are typically quantified over all closed subschemes, rather than just all reduced closed subschemes.)

However, the following result from~\cite{a:qhpss} depends on the quasi-hereditary property and does not seem to follow directly from rigidity.

\begin{thm}
Let $X$ be a nonreduced scheme, and let $t: X_\red \to X$ be the inclusion of the associated reduced scheme.  Then $t_*: \cM(X_\red) \to \cM(X)$ is an equivalence of categories.
\end{thm}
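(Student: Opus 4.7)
My plan is to verify that $t_*$ is exact, well-defined as a functor $\cM(X_\red) \to \cM(X)$, sends simples bijectively to simples, and is then fully faithful and essentially surjective; the subtle point is the last item.

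\emph{Setup.} The $G$-action on $X$ factors through $X_\red$, so $G$-orbits on $X$ and on $X_\red$ coincide as reduced locally closed subschemes, and the equivariant coherent sheaf category on an orbit $C$ is the same whether $C$ is viewed in $X$ or in $X_\red$. Thus the indexing data $(C,\cL)$ for simples of $\cM(X)$ and $\cM(X_\red)$ (Theorem~\ref{thm:simple}) coincide. Since $t$ is a closed immersion, $t_*: \cg{X_\red} \to \cg{X}$ is exact and fully faithful, and $t^! \circ t_* \simeq \mathrm{id}$; writing $i_{C,X} = t \circ i_{C,X_\red}$, this gives
\[
Ri_{C,X}^! \circ t_* \simeq Ri_{C, X_\red}^!,
\qquad
i_{C,X}^* \circ t_* \simeq i_{C,X_\red}^*\ \text{(underived on $\cg{X_\red}$).}
\]

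\emph{Step 1: well-definedness and bijection on simples.} For $F \in \cM(X_\red)$, the $\ru\Dg X 0$-condition on $t_* F$ is immediate from the first identity above. For the $\ru\Dl X 0$-condition I would apply Proposition~\ref{prop:cohom}(2): since $t_*$ is exact on abelian categories, $h^k(t_* F) = t_* h^k(F)$, and the underived identity reduces the required inclusion $i_{C,X}^* h^k(t_* F) \in \cgl C{r(C)-k}$ to the analogous condition on $F$. The same analysis applied to the IC-criterion after Theorem~\ref{thm:simple} yields $t_* \cIC_{X_\red}(C,\cL) \simeq \cIC_X(C,\cL)$, realizing the canonical bijection on simples.

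\emph{Step 2: equivalence.} The functor $t_*: \cM(X_\red) \to \cM(X)$ is exact, and fully faithful on $\Hom$ by inheritance from $\cg{X_\red} \to \cg X$. For essential surjectivity I would argue by induction on length (Theorem~\ref{thm:simple}), reducing to the claim that the abelian-category $\Ext^1$ between simples on $X$ agrees with that on $X_\red$. Granting this, any $G \in \cM(X)$ has a composition series whose simple subquotients $\cIC_X(C_i,\cL_i) = t_* \cIC_{X_\red}(C_i,\cL_i)$ lift to $X_\red$, and the extension data lifts along with them to exhibit $G \simeq t_* G_0$.

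\emph{Main obstacle.} The heart of the proof is the $\Ext^1$-identification: one must rule out nontrivial extensions in $\cM(X)$ built using the nilpotent ideal $\cI = \ker(\cO_X \twoheadrightarrow t_*\cO_{X_\red})$ that have no counterpart in $\cM(X_\red)$. As emphasized in the statement, rigidity of the baric structure alone is insufficient, and the quasi-hereditary property must enter essentially. A natural strategy is to combine the skew-degree constraint of Corollary~\ref{cor:ext1} with the structure of standard covers and costandard hulls (Theorem~\ref{thm:costd-struc})---for example, verifying that $t_*M_{X_\red}(C,\cL)$ satisfies the defining universal property of $M_X(C,\cL)$, and similarly for costandard hulls---but any such verification is itself predicated on controlling $\Ext^1$ in $\cM(X)$, so some nontrivial input from the $s$-structure conditions on $N^*(C)$ (which encode the scheme structure of $X$, not just of $X_\red$) together with the baric-purity results of Section~\ref{subsect:barpur} is needed to break the circularity and exclude the ``nilpotent'' extensions.
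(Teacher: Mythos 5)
There is a genuine gap, and in fact two. First, your ``first identity'' $Ri_{C,X}^! \circ t_* \simeq Ri_{C,X_\red}^!$ is false: writing $i_{C,X} = t \circ i_{C,X_\red}$ gives $Ri_{C,X}^! \circ t_* \simeq Ri_{C,X_\red}^! \circ Rt^! \circ t_*$, and for a closed immersion of a reduced scheme into a nilpotent thickening the unit $\mathrm{id} \to Rt^! t_*$ is \emph{not} an isomorphism. Already for $X = \operatorname{Spec} \Bbbk[\epsilon]/(\epsilon^2)$ one has $Rt^!t_*\Bbbk \simeq \cRHom_{\cO_X}(t_*\cO_{X_\red}, t_*\Bbbk)$, which has nonzero cohomology in every nonnegative degree. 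So the $\ru\Dg X0$-condition on $t_*F$ is not ``immediate''; it is precisely the side that (as the paper stresses after Proposition~\ref{prop:cohom}) cannot be tested on cohomology sheaves, and verifying it requires controlling all the higher $\mathcal{E}\mathit{xt}$-sheaves against the nilpotent ideal. Your well-definedness step therefore does not go through as written. (The $\ru\Dl X0$ side, via Proposition~\ref{prop:cohom}(2) and exactness of $t_*$ on coherent sheaves, is fine.)

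Second, and more fundamentally, your ``Main obstacle'' paragraph is a correct diagnosis but not a proof: you identify that one must exclude extensions in $\cM(X)$ built from the nilpotent ideal $\cI$, propose comparing standard covers and costandard hulls, and then explicitly concede that this comparison is itself ``predicated on controlling $\Ext^1$ in $\cM(X)$'' and that some unnamed input is needed ``to break the circularity.'' That unnamed input is exactly the content of the theorem. Full faithfulness is likewise only asserted ``by inheritance,'' but $t_*: \Db{X_\red} \to \Db X$ is not fully faithful (the extra $\Hom^1$'s coming from $\cI$ are the whole issue), so even this requires an argument inside the hearts. The actual proof in~\cite{a:qhpss} breaks the circle by exploiting the quasi-hereditary structure of Section~\ref{sect:quasi}---in effect producing enough projectives of $\cM(X)$ in the essential image of $t_*$---rather than by a direct $\Ext^1$ computation; your outline gestures toward this but does not carry out any step of it. As it stands the proposal establishes neither well-definedness nor essential surjectivity.
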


\section{Changes in Conventions and Notation}
\label{sect:litguide}

Staggered sheaves were introduced in~\cite{a:sts} in a much more general setting than we have considered here: $G$ and $X$ were schemes of finite type over a noetherian base scheme admitting a dualizing complex, with no assumption on orbits.  With hindsight, this setting appears to have been too general: the key results really do require $X$ to be a variety with finitely many orbits.  The latter setting permits great simplifications of the basic definitions, and thus the definitions in the present note may seem to bear little resemblance to their counterparts in~\cite{a:sts} and subsequent papers.  In this appendix, we briefly indicate how to connect the notions in this note to those in the various papers.

\subsection{$s$-structures}
\label{subsect:sstruc}

In~\cite{a:sts}, an \emph{$s$-structure} on $X$ is a collection of full
subcategories $(\{\cgl Xw\}, \{\cgg Xw\})_{w \in \Z}$ of $\cg X$,
satisfying a rather lengthy list of axioms.  However, on a variety with
finitely many orbits, the ``gluing theorems''~\cite[Theorem~5.3]{a:sts}
and~\cite[Theorem~2.1]{as:sspfv} allow one to reduce the task of specifying
an $s$-structure to specifying one on each orbit, and by~\cite[\S
7]{at:pdtss}, the latter is equivalent to giving a certain central
cocharacter of the isotropy group.

The definition in~\eqref{it:recsplit} actually corresponds to what
have been called ``recessed, split $s$-structures'' in~\cite{at:bstc,
at:pdtss, a:qhpss}. The main theorems all require recessed, split
$s$-structures, so in the present note, that terminology has been dropped,
and those conditions incorporated into the definition of ``$s$-structure.''

\subsection{Perversities}
\label{subsect:perv}

In~\cite{a:sts}, a perversity function was required to obey certain
inequalities, known as the ``monotone'' and ``comonotone'' conditions, in
order for Theorem~\ref{thm:tstruc} to hold, and
Theorem~\ref{thm:simple} was proved only for perversities obeying even
stronger inequalities (``strictly monotone and comonotone'').  These
restrictions were removed in~\cite{at:bstc} by entirely new proofs using
baric structures.  (These new proofs assume ``recessed'' $s$-structures,
however.)  Thus, a perversity function is now an arbitrary function $\orb X
\to \Z$.

\subsection{Duality and codimension}
\label{subsect:duality}

In this note, the Serre--Grothendieck duality functor $\sgD$ has been
defined in terms of ``the'' dualizing complex $\omega_X$, using the
assumption~\eqref{it:embed} to guarantee its existence in positive
characteristic.  Let us now take \emph{dualizing complex} to mean any
object $\tilde\omega_X$ with the property that the functor $\cRHom(\cdot,
\tilde\omega_X)$ is an antiautoequivalence of $\Db X$.  Such dualizing
complexes exist in very great generality (see~\cite[\S V.10]{har:rd}
and~\cite[Proposition~1]{bez:pcs}), without any assumption
like~\eqref{it:embed}.  In the papers~\cite{a:sts, at:bstc, at:pdtss}, the
set-up included the \emph{choice} of a dualizing complex.  No results on
staggered sheaves depend in a substantive way on this choice, but various
formulas such as~\eqref{eqn:dualr} must be modified.

Specifically, for any orbit $C$, the object $i_C^!\tilde\omega_X$ must be a shift of a line bundle; say $i_C^!\tilde\omega \simeq \cL_C[n_C]$.  The \emph{altitude} of $C$, denoted $\alt C$, is defined to be $\step \cL_C$.  We also put $\cod C = -n_C$.  The assumptions of the present note give $\alt C = \step \cano C$ and $\cod C = -\dim C$.  The latter assumption was also made in~\cite{a:qhpss}.

\subsection{Baric and co-$t$-structures}
\label{subsect:barcot}

The definition of $\q\Dsl Xw$ used in
Appendix~\ref{sect:baric} follows~\cite{at:bstc}, but the definition used
in~\cite{at:pdtss} contains an extra factor of $2$, for reasons explained
therein.  Co-$t$-structures were introduced
in~\cite{at:pdtss} in a way that depended on an additional function $q:
\orb X \to \Z$, and the associated categories were denoted $\q\Dkl Xw$ and $\q\Dkg Xw$.  These coincide with the $\ru\Dkl Xw$ and $\ru\Dkg Xw$ of the present note when $q = \llcorner r \lrcorner$, cf. Proposition~\ref{prop:cohom}.


\end{document}